\theoremstyle{plain}
\newtheorem{theorem}{Theorem}
\newtheorem{lemma}[theorem]{Lemma}
\newtheorem{corollary}[theorem]{Corollary}
\theoremstyle{definition}
\theoremstyle{remark}
\newtheorem{remark}[theorem]{Remark}
\newcommand{\E}{\mathbb E\,}
\newcommand{\R}{\mathbb{R}}
\newcommand{\N}{\mathbb{N}}
\newcommand{\supp}{\mathop{\mathrm{supp}}\nolimits}
\newcommand{\conv}{{\mathrm{conv}}}
\newcommand{\diam}{\mathop{\mathrm{diam}}\nolimits}
\newcommand{\ind}{\mathbbm{1}}
\newcommand{\bx}{\mathbf{x}}
\newcommand{\by}{\mathbf{y}}
\newcommand{\bu}{\mathbf{u}}
\newcommand{\dd}{{\rm d}}
\newcommand{\revision}[1]{#1}
\begin{document}

\author[G.~Bonnet]{Gilles Bonnet}
\address{Gilles Bonnet, Faculty of Mathematics, Ruhr University Bochum, Germany}
\email{gilles.bonnet@rub.de}

\author[A.~Gusakova]{Anna Gusakova}
\address{Anna Gusakova, Faculty of Mathematics, Ruhr University Bochum, Germany}
\email{anna.gusakova@rub.de}

\author[C.~Th\"ale]{Christoph Th\"ale}
\address{Christoph Th\"ale, Faculty of Mathematics, Ruhr University Bochum, Germany}
\email{christoph.thaele@rub.de}

\author[D.~Zaporozhets]{Dmitry Zaporozhets}
\address{Dmitry Zaporozhets, St.~Petersburg Department of Steklov Institute of Mathematics, Russia}
\email{zap1979@gmail.com}

\title[Sharp inequalities for the mean distance]{Sharp inequalities for the mean distance\\ of random points in convex bodies}
\keywords{Concave funcion, convex body, geometric extremum problem, geometric inequality, intrinsic volume, mean distance, Riesz rearrangement inequality, sharp geometric inequality}
\subjclass[2010]{52A22, 52A40, 53C65, 60D05}
\thanks{The work of GB and AG was partially supported by the Deutsche Forschungsgemeinschaft (DFG) via RTG 2131 \textit{High-Dimensional Phenomena in Probability-Fluctuations and Discontinuity}.
The work of DZ was supported by the Foundation for the
Advancement of Theoretical Physics and Mathematics ``BASIS'' and by RFBR and DFG according to the research
project 20-51-12004.
}

\begin{abstract}
For a convex body $K\subset\mathbb{R}^d$ the mean distance $\Delta(K)=\mathbb{E}|\revision{X_1-X_2}|$ is the expected Euclidean distance of two independent and uniformly distributed random points $\revision{X_1,X_2}\in K$. Optimal lower and upper bounds for ratio between $\Delta(K)$ and the first intrinsic volume $V_1(K)$ of $K$ (normalized mean width) are derived and degenerate extremal cases are discussed. The argument relies on Riesz's rearrangement inequality and the solution of an optimization problem for powers of concave functions. The relation with results known from the existing literature is reviewed in detail.
\end{abstract}

\maketitle

\section{Introduction}

\begin{table}[t]
    \centering
    \begin{tabular}{|c||c|c|c|c|c|c|}
    \hline
    \parbox[0pt][2em][c]{0cm}{} $K$ & triangle & \makecell{square\\ (parallelogram)} & \makecell{regular \\ pentagon} & \makecell{regular\\ hexagon} & \makecell{regular \\ octagon} & \makecell{circle \\ (ellipse)}\\
    \hline
    \hline
    \parbox[0pt][2em][c]{0cm}{} $p(4,K)$ & $\frac{1}{3}$ & $\frac{11}{36}$ & $\frac{1}{45}(9+2\sqrt{5})$ & $\frac{298}{972}$ & $\frac{97+52\sqrt{2}}{576}$ & $\frac{35}{12\pi^2}$\\
    \hline
    \end{tabular}
     \caption{Particular values for the probability $p(4,K)$, see \cite{alikoski1938sylvestersche} and \cite[p.\ 114]{Solomon}.}
    \label{tab:Sylvester}
\end{table}

One of the most classical questions in \revision{the} area of geometric probability is Sylvester's question~\cite{jS64}, which asks for the probability $p(4,K)$ that the convex hull $\conv(X_1,X_2,X_3,X_3)$ of four independently and uniformly  distributed random points $X_1, X_2$, $X_3, X_4$ in a planar compact convex set $K\subset\R^2$ is a triangle. For particular sets $K$ the precise value of $p(4,K)$ is known and we refer to \cite[Sections 2.31--2.34]{KenMor}, \cite{rP89} and \cite[Chapter 5]{Solomon} for an extensive discussion. We also collect some examples in Table \ref{tab:Sylvester}. Using symmetrization arguments, Blaschke~\cite{wB17} was able to prove that for any compact convex set with non-empty interior $K\subset\mathbb{R}^2$  the two-sided inequality
\begin{align}\label{2328}
    \frac{35}{12\pi^2}\leq p(4,K) \leq\frac{1}{3}
\end{align}
holds. A glance at Table \ref{tab:Sylvester} shows that the lower bound is achieved if (and, in fact, only if) $K$ is an ellipse, and the upper bound if (and, in fact, only if) $K$ is a triangle. In this context one should note that $p(4,K)$ is invariant under affine transformations in the plane, which implies that the precise form of the ellipse and triangle does not play a role. It is not hard to verify that
\begin{align*}
    p(4,K)=4\,\frac{\E \mathrm A(\conv(X_1,X_2,X_3))}{\mathrm A(K)},
\end{align*}
where $\mathrm A(K)$ stands for the area of $K$, see \cite[Equation (8.11)]{SW08}. Therefore, Blaschke's inequality \eqref{2328} is equivalent to
\begin{align}\label{eq:Blaschke}
    \frac{35}{48\pi^2}\leq\frac{\E \mathrm A(\conv(X_1,X_2,X_3))}{\mathrm A(K)}\leq\frac{1}{12},
\end{align}
which gives the optimal lower and upper bound for the normalized mean area of the random triangle with vertices uniformly distributed in a planar compact convex set.

In the present paper we take up this classical and celebrated topic and instead of three points consider the situation where only two random points uniformly distributed in a compact convex set with non-empty interior $K\subset\mathbb{R}^2$ are selected. In this case, their convex hull is a random segment having a random length. It is thus natural to ask for the optimal bounds of the normalized average length of this segment. While the area of the random triangle is normalized by the area of $K$, the length of the random segment should be normalized by the perimeter of $K$ denoted by $P(K)$. In this paper we will prove that for any compact convex set $K\subset\R^2$ with non-empty interior the inequality
\begin{align}\label{1316}
    \frac{7}{60}<\frac{\E |X_1-X_2|}{\mathrm P(K)}<\frac{1}{6}
\end{align}
holds, where $|X_1-X_2|$ denotes the Euclidean distance of $X_1$ and $X_2$. We emphasize that in contrast to \eqref{eq:Blaschke} the inequalities on both sides of \eqref{1316} are strict, and we shall argue that \eqref{1316} is in fact optimal. Moreover, it will turn out that both bounds cannot be achieved by planar compact convex sets with interior points. In fact, the extremal cases correspond to two different degenerate situations, which we will described in detail. We would like to stress at this point that this surprising degeneracy phenomenon \revision{has only rarely been observed in similar situations so far in the existing literature around convex geometric inequalities. As such an exception we mention the inequalities for angle sums of convex polytopes by Perles and Shephard \cite{PerShep69}.}

Remarkably, we will be able to derive the analogue of \eqref{1316} in any dimension $d\geq 2$, where instead of the perimeter one normalizes the mean distance by the so-called first intrinsic volume of $K$, which in turn is a constant multiple of the mean width. We emphasize that this is in sharp contrast to Blaschke's inequality \eqref{eq:Blaschke} for which only a lower bound is known in any space dimension. This is the context of Busemann's random simplex inequality for which we refer to \cite{Busemann} or \cite[Theorem 8.6.1]{SW08} (according to results of Groemer \cite{Groemer0,Groemer} this holds more generally for convex hulls generated by an arbitrary number $n\geq d+1$ of random points and also for higher moments of the volume). A corresponding upper bound is still unknown, but in view of the planar case, it seems natural to expect that a sharp upper bound is provided by $d$-dimensional simplices. This is known as the simplex conjecture in convex geometric analysis and a positive solution would imply the famous hyperplane conjecture, see \cite{MilPaj} or \cite[Corollary 3.5.8]{BGVV2014}.

\medspace 

The remaining parts of this paper are structured as follows. In Section \ref{sec:History} we start with some historical remarks of what is known about the so-called mean distance of convex bodies. Our main result is presented in Section \ref{sec:Main}. Its proof is divided into several parts: proof of the lower bound (Section \ref{subsec:LowerBound}), proof of the upper bound (Sections \ref{subsec:UpperBound1}--\ref{subsec:UpperBound3}) and sharpness of the estimates (Section \ref{1305}).

\begin{figure}[t]
    \centering
    \includegraphics[width=0.3\columnwidth]{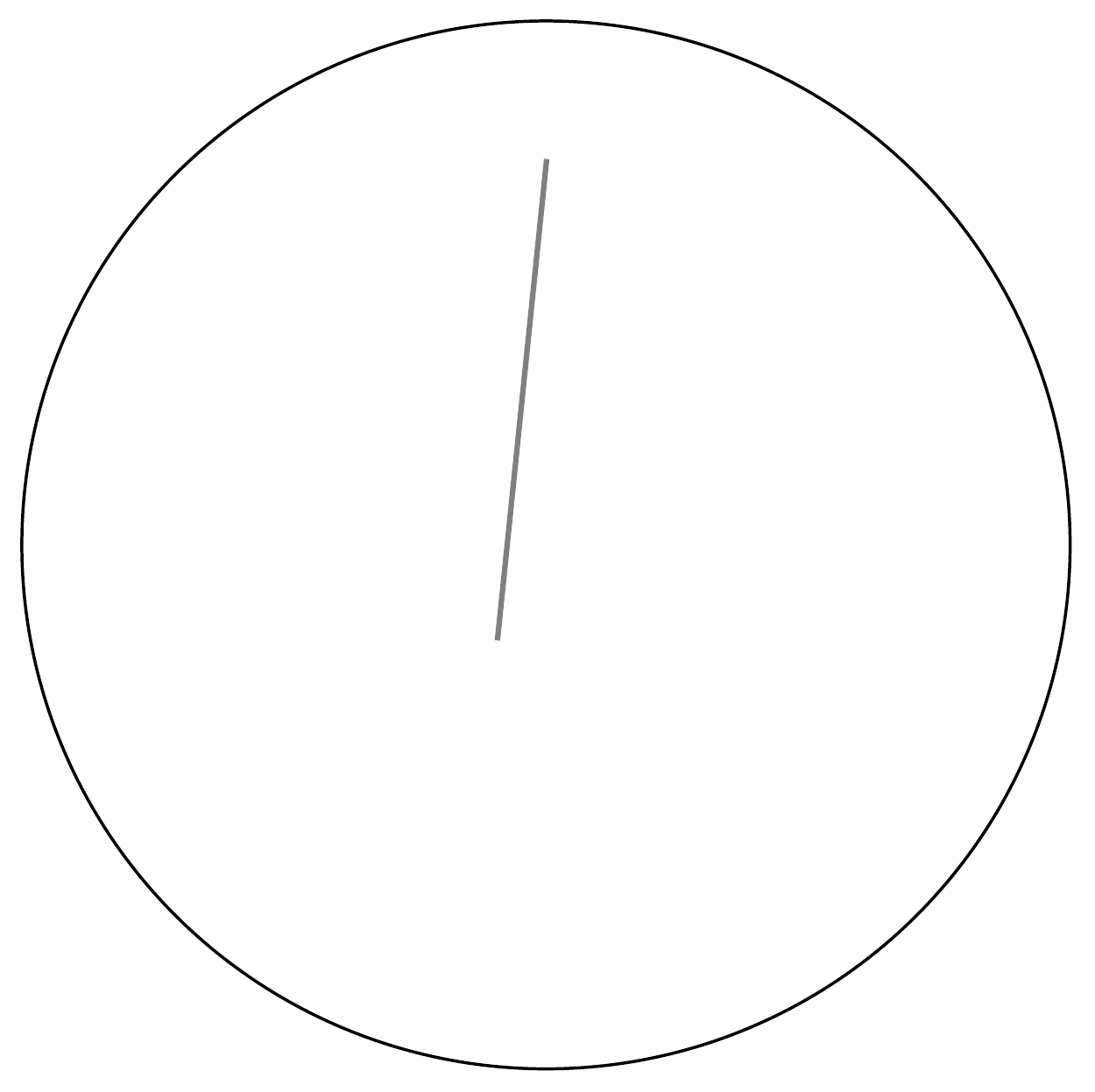}\quad
    \includegraphics[width=0.3\columnwidth]{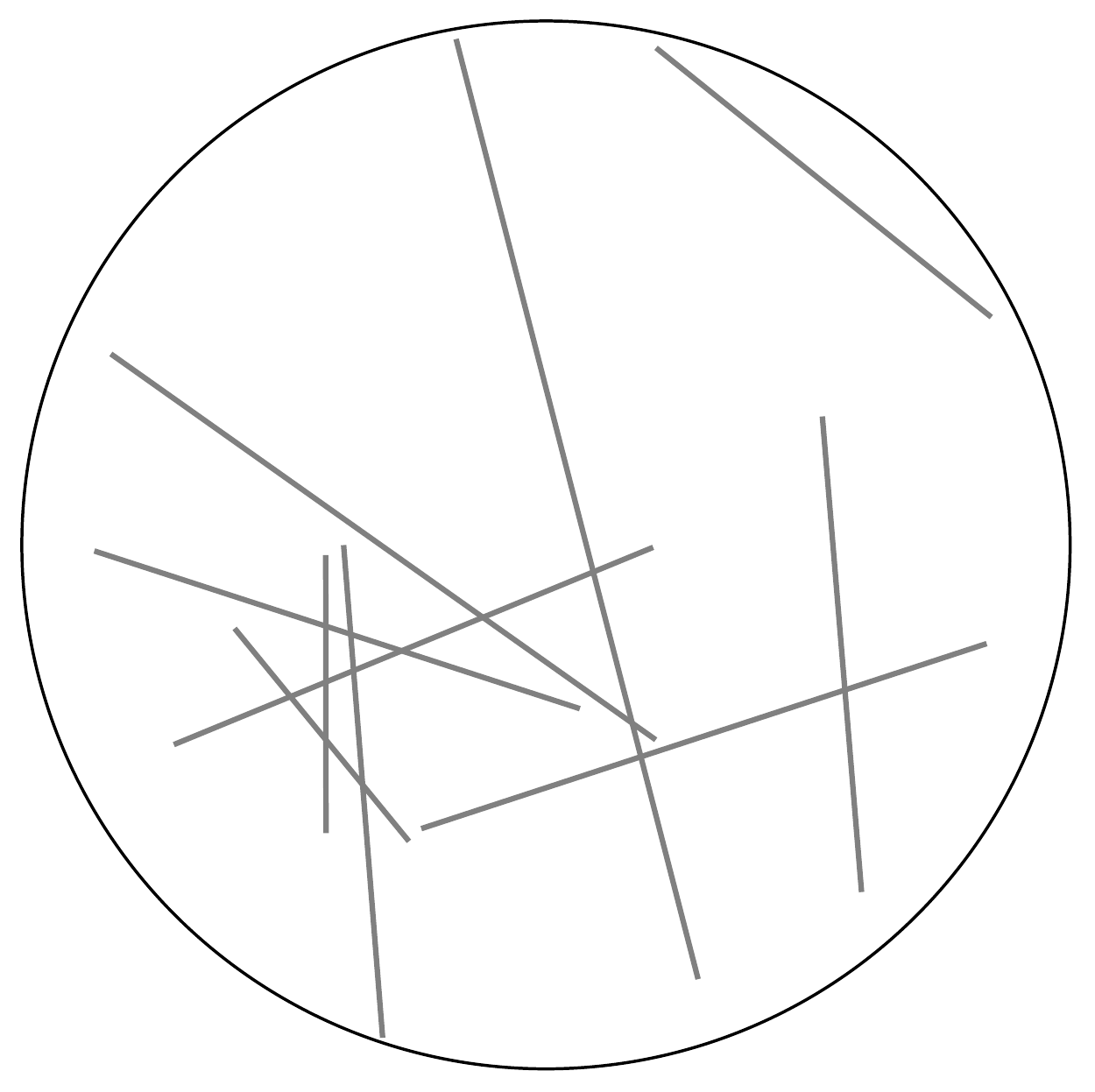}\quad
    \includegraphics[width=0.3\columnwidth]{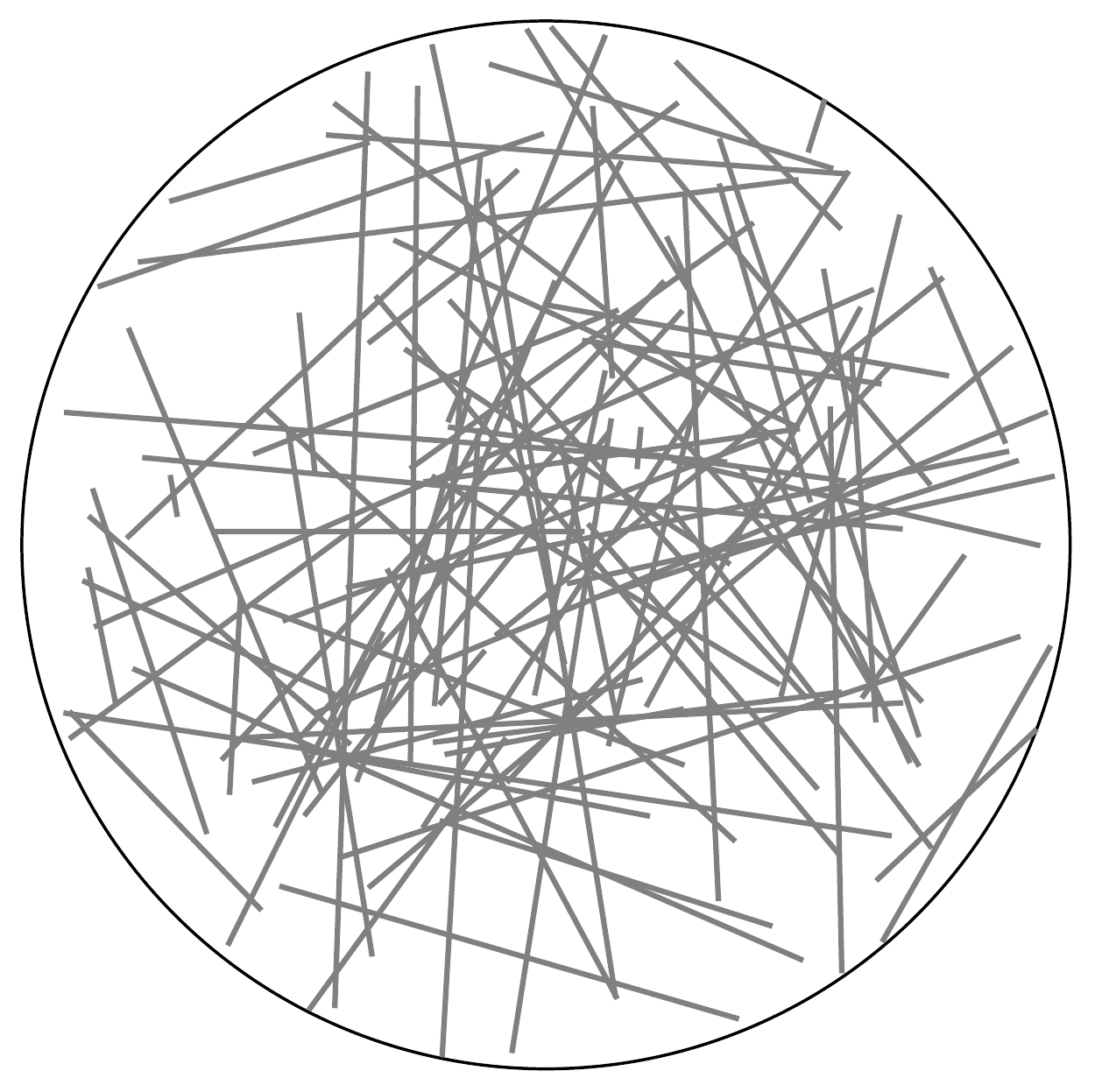}
    \caption{Simulation of $1$ (left), $10$ (middle) and $100$ (right) random segments generated by two random points $X_1,X_2$ in a planar disc $B^2$ of radius $1$. The mean distance $\Delta(B^2)=\mathbb{E}|X_1-X_2|=\frac{128}{45\pi}\approx 0.9054\ldots$ is the expected length of these segments.}
    \label{fig_Segment}
\end{figure}

\section{Historical remarks}\label{sec:History}

Before presenting our main results, we start with some historical remarks, which should help reader to bring our results in line with what is known from the literature. We also introduce some basic notation that will be used throughout the paper.

By a convex body in $\R^d$ we understand a compact convex subset of $\R^d$ with non-empty interior.  Let $K\subset\R^d$ be a convex body and let \revision{$X_1$ and $X_2$} be two independent random vectors uniformly distributed in $K$. We will denote by $\Delta(K)$ the mean distance between \revision{$X_1$ and $X_2$}, that is,
\begin{align*}
    \Delta(K):=\E |\revision{X_1-X_2}|=\frac{1}{|K|^2}\int_K\int_K|\revision{\bx_1-\bx_2}|\,\dd \revision{\bx_1 \,\dd \bx_2}.
\end{align*}
Here and in what follows, $|A|$ will denote the volume of a measurable set $A\subset\R^d$ of the appropriate dimension, by which we understand the Lebesgue measure with respect to the affine hull of $A$. It is known from \cite[Equation~(21)]{gC67} or \cite[Equation~(34)]{jK69} that for any $p>-d$,
\begin{align*}
    \int_K&\int_K|\bx-\by|^p\,\dd \bx \,\dd \by=\frac{2}{(d+p)(d+p+1)}\int_{\mathbb S^{d-1}}\int_{\bu^\perp}|\bx_\bu\cap K|^{d+p+1}\, \lambda_{\bu^\perp}(\mathrm d\bx)\mu(\mathrm d\bu),
\end{align*}
where $\bx_\bu$ is a line through $\bx$ parallel to $\bu$, \revision{$\bu^\perp$ is the linear hyperplane
orthogonal to the unit vector u}, and $\lambda_{\bu^\perp}$ and $\mu$ are the Lebesgue measures on $\bu^\perp$ and the spherical Lebesgue measure on $\mathbb S^{d-1}$, respectively. Therefore, an alternative form for the mean distance $\Delta(K)$ is given by
\begin{align}\label{1324}
    \Delta(K)=\frac{2}{(d+1)(d+2)|K|^2}\int_{\mathbb S^{d-1}}\int_{\bu^\perp}|\bx_\bu\cap K|^{d+2}\, \lambda_{\bu^\perp}(\revision{\mathrm d}\mathrm \bx)\mu(\revision{\mathrm d}\mathrm \bu).
\end{align}

There are relatively few examples of convex bodies $K$ for which the exact value of $\Delta(K)$ is actually known. Most of them are $2$-dimensional and all depend on the explicit shape of $K$. The simplest ones are the circle ($2$-dimensional ball) $B^2(r)$ of radius $r>0$  and the  regular triangle $T^2(a)$ with side length $a>0$. For these sets we have
\begin{align*}
    \Delta(B^2(r))=\frac{128}{45\pi}r\qquad\text{and}\qquad \Delta(T^2(a))=a\bigg(\frac15+\frac3{20}\log3\bigg)
\end{align*}
from~\cite{dF64} and~\cite[Page~785]{mC85}; see Figure \ref{fig_Segment} for an illustration of the first case. However, even for a rectangle $R(a,b)$ with side lengths $0<a\leq b$ the formula becomes much more involved. In fact, from \cite{bG51} it is known that
\begin{align*}
    \Delta(R(a,b))=\frac{1}{15}\bigg[&\frac{a^3}{b^2}+\sqrt{a^2+b^2}\bigg(3-\frac{a^2}{b^2}-\frac{b^2}{a^2}\bigg)
    \\&+\frac52\bigg(\frac{b^2}{a}+\log\frac{a+\sqrt{a^2+b^2}}{b}+\log\frac{b+\sqrt{a^2+b^2}}{a}\bigg)\bigg].
\end{align*}
For the cases when $K$ is an arbitrary triangle, ellipse or parallelogram we refer to~\cite{tS85}. The mean distance for a regular hexagon $H(a)$ with side length $a>0$ was considered in~\cite{ZP}. In this case 
\begin{align}\label{eq:Hexagon}
    \revision{\Delta(H(a))=a\bigg[{7\sqrt{3}\over 30}-{7\over 90}+{1\over 60}\Big[28\log(2\sqrt{3}+3)+29\log(2\sqrt{3}-3)\Big]\bigg].}
\end{align}
\revision{The distribution function of $|X_1-X_2|$ was calculated for an arbitrary regular polygon in~\cite{uB14}, while moments (especially of order one, two and four) are the content of the recent article \cite{Baesel21}.}

In higher dimensions, the number of examples for which an exact formula for $\Delta(K)$ is available is rather limited. Perhaps the most well-known one is the so-called Robbins constant, which gives $\Delta([0,1]^3)$ for the $3$-dimensional unit cube:
\begin{align*}
    \Delta([0,1]^3)=\frac1{105}\bigg[4&+17\sqrt2-6\sqrt3
    +21\log(1+\sqrt 2)+42\log(2+\sqrt 3)-7\pi\bigg].
\end{align*}
For the multidimensional unit cube $[0,1]^d$ with $d\geq 2$, $\Delta([0,1)^d)$ is known as a box integral, which does not have a closed form expression for dimensions $d\geq 4$, see~\cite{BBC07}.

A non-trivial case for which the answer is known in any dimension is the unit $d$-dimensional ball $B^d=B^d(1)$. In fact, a special case of~\cite[Theorem~2]{rM71} yields that
\begin{align}\label{2312}
    \Delta(B^d)=\frac{2^{2d+2}d\cdot \big[\Gamma\big(\frac d2+1\big)\big]^2}{(2d+1)!!(d+1)\pi},
\end{align}
where $\Gamma(\,\cdot\,)$ is the Gamma function and $d!!=d(d-2)\cdots$ the double factorial. If the convex body $K$ is an ellipsoid in $\R^d$ with semi-axes $a_1,\dots,a_d>0$, then~\eqref{2312} can be generalized as follows (see~\cite[Theorem~3.1]{lH14} combined with~\eqref{1324}):
\begin{align*}
    \Delta(K)={\frac{2^{d+1}\big[\Gamma\big(\frac d2+1\big)\big]^3}{(d+1)\pi^{(d+1)/2}\Gamma\big(d+\frac{3}{2}\big)}}\int_{\mathbb S^{d-1}}\sqrt{a_1^2u_1^2+\ldots+a_d^2u_d^2}\,\,\mu(\dd\bu).
\end{align*}

Apart from the exact formulas we presented so far, there are several  bounds for $\Delta(K)$ in terms of different geometric characteristics of the convex body $K\subset\R^d$. The most well-known one relates $\Delta(K)$ with the volume $|K|$ of $K$. It says that
\begin{align*}
    \Delta(K)\geq\frac{2^{2d+2}d\cdot \big[\Gamma\big(\frac d2+1\big)\big]^{2+1/d}}{(2d+1)!!(d+1)\pi^{3/2}}|K|^{1/d},
\end{align*}
and equality holds if and only if $K$ is a $d$-dimensional Euclidean ball. The result can be found in~\cite{wB18} for dimension $d=2$ and in~\cite{rP90} for higher dimensions $d\geq 3$.

In~\cite{BP09}, $\Delta(K)$ was bounded from above by the diameter $\diam(K)$ of $K$. The inequality says that
\begin{align}\label{eq:UpperBdDiam}
    \Delta(K)\leq\diam(K)\sqrt{\frac{2d}{\pi(d+1)}}\cdot\frac{2^{d-2}\left[\Gamma(d/2)\right]^2}{\Gamma(d-1/2)}
\end{align}
for any convex body $K\subset\R^d$. Apparently, this bound is far from being optimal. In the next section we will present a complementing best possible lower bound and an upper bound, which improves \eqref{eq:UpperBdDiam} in low dimensions, see Corollary \ref{cor:Diam}.

To the best of our knowledge, bounds in terms of other characteristics (not following from the existed ones) are not known. 

\section{Main result}\label{sec:Main}

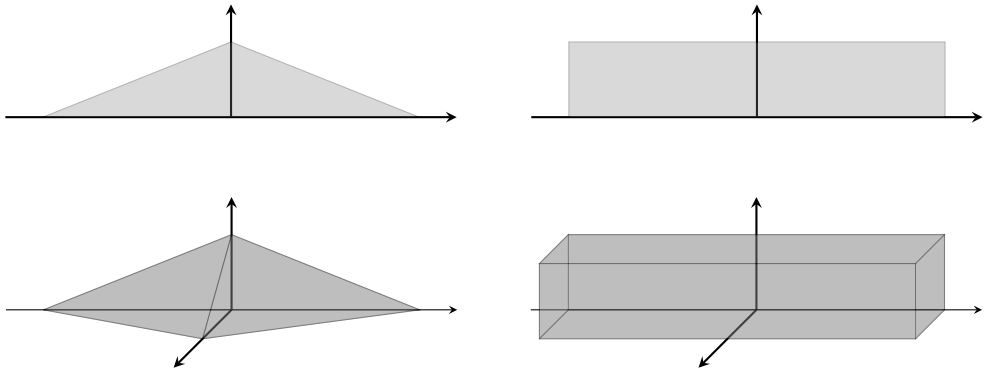
\begin{figure}[t]
\begin{center}
\begin{tikzpicture}
\draw[thick,-stealth] (-3,0)--(3,0); 
\draw[thick,-stealth] (0,0)--(0,1.5); \coordinate (A) at (-2.5,0);
\coordinate (B) at (2.5,0);
\coordinate (C) at (0,1);
\draw[fill=gray,opacity=0.3] (A)--(B)--(C)--(A);
\end{tikzpicture}
\qquad
\begin{tikzpicture}
\draw[thick,-stealth] (-3,0)--(3,0); 
\draw[thick,-stealth] (0,0)--(0,1.5); \coordinate (A) at (-2.5,0);
\coordinate (B) at (2.5,0);
\coordinate (C) at (2.5,1);
\coordinate (D) at (-2.5,1);
\draw[fill=gray,opacity=0.3] (A)--(B)--(C)--(D)--(A);
\end{tikzpicture}\\
\vspace{1cm}
\begin{tikzpicture}
\draw[thick,-stealth] (0,0,0)--(0,0,2); 
\draw[thick,-stealth] (0,0,0)--(0,1.5,0);
\draw[-stealth] (-3,0,0)--(3,0,0);

\coordinate (A) at (-2.5,0,0);
\coordinate (B) at (2.5,0,0);
\coordinate (C) at (0,0,1);
\coordinate (D) at (0,1,0);
\draw[fill=gray,opacity=0.3] (A)--(B)--(C)--(A);
\draw[fill=gray,opacity=0.3] (A)--(B)--(D)--(A);
\draw[fill=gray,opacity=0.3] (A)--(C)--(D)--(A);
\draw[fill=gray,opacity=0.3] (B)--(C)--(D)--(B);
\end{tikzpicture}
\qquad
\begin{tikzpicture}
\draw[thick,-stealth] (0,0,0)--(0,0,2); 
\draw[thick,-stealth] (0,0,0)--(0,1.5,0);
\draw[-stealth] (-3,0,0)--(3,0,0);

\coordinate (A1) at (-2.5,0,0);
\coordinate (A2) at (-2.5,1,0);
\coordinate (A3) at (-2.5,1,1);
\coordinate (A4) at (-2.5,0,1);
\coordinate (B1) at (2.5,0,0);
\coordinate (B2) at (2.5,1,0);
\coordinate (B3) at (2.5,1,1);
\coordinate (B4) at (2.5,0,1);
\draw[fill=gray,opacity=0.3] (A1)--(A2)--(A3)--(A4)--(A1);
\draw[fill=gray,opacity=0.3] (B1)--(B2)--(B3)--(B4)--(B1);
\draw[fill=gray,opacity=0.3] (A1)--(B1)--(B2)--(A2)--(A1);
\draw[fill=gray,opacity=0.3] (A2)--(B2)--(B3)--(A3)--(A2);
\draw[fill=gray,opacity=0.3] (A3)--(B3)--(B4)--(A4)--(A3);
\draw[fill=gray,opacity=0.3] (A4)--(B4)--(B1)--(A1)--(A4);
\end{tikzpicture}
\end{center}
    \caption{Illustration of $K_\delta$ (left) and $K_\delta'$ (right) for $d=2$ (upper row) and $d=3$ (lower row).}
    \label{fig:Extremizers}
\end{figure}

Let $K\subset\R^d$ be a convex body. The main goal of this paper is to derive the optimal lower and upper bounds for $\Delta(K)$ normalized by the mean width of $K$, which is given by
\begin{align*}
    W(K):=\int_{\mathbb S^{d-1}}|P_\bu K|\,\mu(\dd\bu),
\end{align*}
where $|P_\bu K|$ denotes the length of the projection of $K$ onto the line spanned by $\bu$.

An obstacle when working with the mean width is its dependence on the dimension of the ambient space. In fact, if we embed $K$ into $\R^n$ with $n\geq d$, then $W(K)$ is strictly decreasing with respect to $n$. That is why it is convenient to use the following normalized version of the mean width:
\begin{align}\label{1339}
    V_1(K):=\sqrt\pi\frac{\Gamma\big(\frac{d+1}{2}\big)}{\Gamma\big(\frac{d}{2}\big)}\int_{\mathbb S^{d-1}}|P_\bu K|\,\mu(\dd\bu).
\end{align}
This quantity is known as {the first intrinsic volume} of $K$, and it  does not depend on the dimension of the ambient space. In particular, this property \revision{and  \eqref{1339} with $d=1$ imply} that for any one-dimensional line segment $I\subset\R^d$, $V_1(I)$ coincides with the length $|I|$ of $I$, i.e., 
\begin{align}\label{1334}
    V_1(I)=|I|.
\end{align}

Now we are ready to formulate our main result, whose proof is postponed to the next section. Denote by $e_1,\dots,e_d$ the standard orthonormal basis in $\R^d$.

\begin{theorem}\label{2311}
For any convex body $K\subset\R^d$ one has that
\begin{align}\label{1719}
    \frac{3d+1}{2(d+1)(2d+1)}<\frac{\Delta(K)}{V_1(K)}<\frac13.
\end{align}
Moreover, this inequality is sharp in the following sense: the two families of the convex bodies defined, for $\delta>0$, as
\begin{align*}
    K_\delta &:=\conv({e_1,-e_1,\delta e_2,\delta e_3},\dots,\delta e_d),\\
    K'_\delta &:=[{-1},1]\times[0, \delta]^{d-1}
\end{align*}
satisfy
\begin{align}\label{1308}
    \lim_{\delta\to 0}\frac{\Delta(K_\delta)}{V_1(K_\delta)}=\frac{3d+1}{2(d+1)(2d+1)}\quad\text{and}\quad\lim_{\delta\to 0}\frac{\Delta(K'_\delta)}{V_1(K'_\delta)}=\frac13.
\end{align}
\end{theorem}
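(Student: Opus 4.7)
The plan is to reduce the $d$-dimensional inequality~\eqref{1719} to a one-dimensional extremal problem for $(d-1)$-st powers of concave functions, and then to handle the sharpness~\eqref{1308} by a direct limit computation on $K_\delta$ and $K'_\delta$. Using the integral representation of the Euclidean norm as a sphere-average of absolute linear projections, I would first rewrite the numerator as $\Delta(K) = \text{const}\cdot \int_{\mathbb{S}^{d-1}} \Delta_\bu(K)\,\mu(\dd\bu)$ with $\Delta_\bu(K):=\E|\langle X_1-X_2,\bu\rangle|$; paired with~\eqref{1339}, the absolute constants match so that $\Delta(K)/V_1(K)$ equals a $\mu$-weighted average of the pointwise ratios $\Delta_\bu(K)/|P_\bu K|$. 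It therefore suffices to establish the pointwise bound $\tfrac{3d+1}{2(d+1)(2d+1)} < \Delta_\bu(K)/|P_\bu K| < \tfrac{1}{3}$ for every $\bu \in \mathbb{S}^{d-1}$ and every convex body $K$. By Brunn's theorem, the slice function $g_\bu(t) := |K \cap \{\langle \cdot,\bu\rangle = t\}|$ satisfies that $h := g_\bu^{1/(d-1)}$ is concave and non-negative on $P_\bu K$, and the projected coordinate $\langle X_1,\bu\rangle$ has density proportional to $h^{d-1}$. The entire estimate thus reduces to the following one-dimensional lemma: for every non-negative, not identically zero, concave $h$ on an interval $I$ of length $w$,
\begin{align*}
    \frac{3d+1}{2(d+1)(2d+1)}\,w \;\leq\; \frac{\int_I \int_I |s-t|\,h(s)^{d-1} h(t)^{d-1}\,\dd s\,\dd t}{\bigl(\int_I h^{d-1}\bigr)^2} \;\leq\; \frac{w}{3},
\end{align*}
with the lower equality attained by the tent $h(t)=(1-|2t/w|)_+$ centered on $I$ and the upper equality by $h$ constant on $I$.

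For the lower bound of the one-dimensional lemma I would apply Riesz's rearrangement inequality: since $|s-t|$ is symmetric increasing in $s-t$, the symmetric decreasing rearrangement of $h^{d-1}$ strictly decreases the double integral while preserving $\int h^{d-1}$ and $w$, and the rearranged function retains the property that its $(d-1)$-st root is concave (using that super-level-set lengths of a concave function are concave in the height). After this reduction to symmetric decreasing $h$, a Choquet-type representation as a non-negative mixture of tent functions combined with homogeneity localises the infimum at the tent $h(t)=(1-|t|)_+$, where the value $(3d+1)/((d+1)(2d+1))\cdot w$ is a direct computation. The upper bound is the more delicate half because rearrangement acts in the wrong direction; my plan there is to exploit the extreme-ray structure of the convex cone of concave non-negative functions on $I$, whose extreme rays are the affine half-tents $(t-a)_+$ and $(b-t)_+$, and to combine an evaluation of the functional on these extremes with a Jensen-type inequality on $h^{d-1}$ to isolate the constant function as the unique maximiser. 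I expect this upper-bound analysis to be the main technical obstacle of the whole proof, consistent with the paper's devoting three subsections to it. Integrating the pointwise bound over $\bu$ then yields~\eqref{1719}, strictly because no convex body with non-empty interior in $\R^d$ can have $g_\bu$ of the degenerate extremal shape in every direction simultaneously.

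The sharpness claim~\eqref{1308} is finally handled by direct computation. Both $K_\delta$ and $K'_\delta$ Hausdorff-converge to the segment $[-e_1,e_1]$ as $\delta\to 0$, so by continuity of $V_1$ and~\eqref{1334} one has $V_1(K_\delta), V_1(K'_\delta)\to 2$. For the product body $K'_\delta$ the coordinates of a uniform random point are independent, the first being $\mathrm{Unif}[-1,1]$ and the others of range $O(\delta)$, so dominated convergence gives $\Delta(K'_\delta)\to 2/3$ and hence $\Delta(K'_\delta)/V_1(K'_\delta)\to 1/3$. For the bipyramid $K_\delta$ an elementary cross-section computation shows that the first coordinate of a uniform point has density $\tfrac{d}{2}(1-|t|)^{d-1}$ on $[-1,1]$ independently of $\delta$, while the other coordinates are again $O(\delta)$; the mean gap of this density, computed via $\E|Y_1-Y_2|=2\int F(1-F)\,\dd t$ with $F(t)=\tfrac12(1+t)^d$ on $[-1,0]$ and the symmetric counterpart on $[0,1]$, evaluates to $(3d+1)/((d+1)(2d+1))$, which divided by $2$ gives the left-hand limit in~\eqref{1308}.
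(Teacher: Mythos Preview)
Your overall framework coincides with the paper's: the reduction of $\Delta(K)/V_1(K)$ to a $\mu$-average of one-dimensional ratios $\Delta_\bu(K)/|P_\bu K|$, the use of Brunn's principle to land in the class of densities that are $(d-1)$-st powers of concave functions, the appeal to Riesz's rearrangement inequality for the lower bound, and the sharpness argument via direct limits on $K_\delta$ and $K'_\delta$ are all exactly what the paper does.

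The gap is in how you propose to finish the one-dimensional extremal problems. For the lower bound, a Choquet representation of the concave function $h$ as a mixture of tent functions does not localise the infimum at a single tent, because the functional
\[
F(h)=\frac{\int\int|s-t|\,h(s)^{d-1}h(t)^{d-1}\,\dd s\,\dd t}{\bigl(\int h^{d-1}\bigr)^{2}}
\]
is quadratic in the density $h^{d-1}$ and highly nonlinear in $h$; neither the numerator nor the denominator decomposes along a mixture $h=\int\phi_a\,\dd\nu(a)$, so the usual Choquet mechanism (linear functional attains its extremum at an extreme point) does not apply. The paper avoids this entirely: after Riesz it writes $I(h)=\tfrac12-2\int_0^1\tilde H(t)^2\,\dd t$ and proves $\tilde H\le\tilde H_0$ pointwise via an elementary single-sign-change argument on $h-h_0$, using only that $h^{1/(d-1)}-h_0^{1/(d-1)}$ is concave on $[0,1]$.

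The same obstruction hits your upper-bound plan. The extreme rays of the cone of nonnegative concave functions on $I$ are indeed the half-tents, but on those half-tents the functional evaluates to $\tfrac{2d}{(d+1)(2d+1)}<\tfrac13$, strictly below the claimed maximum; so the constant function is \emph{not} an extreme ray, and a Jensen/Choquet heuristic cannot isolate it. The paper's route is quite different and more involved: first a compactness argument (uniform Lipschitz bounds on $h^{1/(d-1)}$ away from the endpoints, Arzel\`a--Ascoli) shows a maximiser exists; then a local perturbation argument (three cases depending on the sign of $J(h,x)-2I(h)$ at a point of strict concavity) forces any maximiser to have $h^{1/(d-1)}$ affine on $[-1,1]$; finally an explicit computation over this one-parameter family of affine functions shows $I(h)\le\tfrac13$ with equality only for the constant. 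Your acknowledgement that this is ``the main technical obstacle'' is accurate, but the sketch you give does not contain the key ideas needed to overcome it.
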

\begin{remark}
    The constructed families $K_\delta, K_\delta'$ and~\eqref{1308} show that on the space of compact sets, $\Delta(\,\cdot\,)$ is neither continuous in Hausdorff metric nor monotone with respect to set inclusion. The two sets $K_\delta$ and $K_\delta'$ are illustrated for $d=3$ in Figure \ref{fig:Extremizers}.
\end{remark}

Let us derive the following consequence of Theorem \ref{2311}, which yields an optimal lower bound for the quantity $\Delta(K)/\diam(K)$, which was already discussed in the previous section. 
\begin{corollary}\label{cor:Diam}
For any convex body $K\subset\R^d$ one has that
\begin{align}\label{1119}
    \frac{3d+1}{2(d+1)(2d+1)}<\frac{\Delta(K)}{\diam(K)}<{\sqrt{\pi}\over 3}\,\frac{\Gamma\big(\frac{d+1}{2}\big)}{\Gamma\big(\frac{d}{2}\big)}.
\end{align}
Moreover, this inequality from below is sharp in the following sense: for $K_\delta$, $\delta>0$, as defined in Theorem~\ref{2311} we have that
\begin{align}\label{1706}
     \lim_{\delta\to 0}\frac{\Delta(K_\delta)}{\diam(K_\delta)}=\frac{3d+1}{2(d+1)(2d+1)}.
\end{align}
\end{corollary}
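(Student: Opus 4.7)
The plan is to deduce the corollary directly from Theorem \ref{2311} by establishing the two-sided comparison
\[
\diam(K) \leq V_1(K) \leq \sqrt{\pi}\,\frac{\Gamma((d+1)/2)}{\Gamma(d/2)}\,\diam(K),
\]
and then dividing the chain of inequalities \eqref{1719} through by the positive quantity $\diam(K)/V_1(K)$. Both estimates follow easily from the definition \eqref{1339}. For the lower bound on $V_1(K)$, if $p,q \in K$ realize the diameter then the segment $I := [p,q]$ is contained in $K$; since $\bu \mapsto |P_\bu(\cdot)|$ is monotone under set inclusion, so is $V_1$, hence $V_1(K) \geq V_1(I) = |I| = \diam(K)$ by \eqref{1334}. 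For the upper bound, the elementary inequality $|P_\bu K| \leq \diam(K)$ valid for every $\bu \in \mathbb{S}^{d-1}$ may be integrated over the (normalized) spherical measure $\mu$ in \eqref{1339}; the correct normalization can be confirmed by a sanity check on the unit ball, where $|P_\bu B^d| \equiv 2$ recovers $V_1(B^d)=\sqrt{\pi}\,\Gamma((d+1)/2)/\Gamma(d/2) \cdot 2$.

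Inserting these two estimates into \eqref{1719} yields \eqref{1119} immediately, and the strictness of the resulting inequalities is inherited from Theorem \ref{2311}, regardless of whether the comparisons between $V_1(K)$ and $\diam(K)$ are tight.

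It remains to establish the sharpness statement \eqref{1706}. A direct inspection of the vertices of $K_\delta = \conv(\pm e_1,\delta e_2,\ldots,\delta e_d)$ shows that for all sufficiently small $\delta>0$ the pair $\pm e_1$ realizes the diameter, so $\diam(K_\delta) = 2$. Moreover, $K_\delta$ converges to the segment $[-e_1,e_1]$ in the Hausdorff metric as $\delta \to 0$, and since $V_1$ is continuous with respect to the Hausdorff metric on the space of convex bodies, $V_1(K_\delta) \to V_1([-e_1,e_1]) = 2$ by \eqref{1334}. Therefore $V_1(K_\delta)/\diam(K_\delta) \to 1$, and factoring the target ratio as
\[
\frac{\Delta(K_\delta)}{\diam(K_\delta)} = \frac{\Delta(K_\delta)}{V_1(K_\delta)}\cdot\frac{V_1(K_\delta)}{\diam(K_\delta)}
\]
together with \eqref{1308} gives the claimed limit $(3d+1)/(2(d+1)(2d+1))$.

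The argument is essentially routine once Theorem \ref{2311} is in hand, and I do not anticipate any significant obstacle. The only mildly delicate point is identifying the correct normalization of $\mu$ in \eqref{1339} so that the projection-length bound delivers precisely the constant $\sqrt{\pi}\,\Gamma((d+1)/2)/\Gamma(d/2)$ appearing in the upper bound of \eqref{1119}; this is a one-line computation on the unit ball. Note that the upper bound in \eqref{1119} is in general not expected to be sharp, since saturating $\Delta/V_1 \to 1/3$ requires degeneration to a long thin slab (the family $K'_\delta$), while saturating $V_1/\diam \to \sqrt{\pi}\,\Gamma((d+1)/2)/\Gamma(d/2)$ requires the opposite limit, namely rotational symmetry as for a ball; these two regimes are incompatible, which is consistent with the corollary claiming sharpness only for the lower bound.
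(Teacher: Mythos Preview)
Your proof is correct and follows essentially the same route as the paper: both arguments sandwich $\diam(K)$ between $V_1(K)$ (via a maximal segment and monotonicity of $V_1$) and $V_1(K)\big/\big(\sqrt{\pi}\,\Gamma((d+1)/2)/\Gamma(d/2)\big)$ (via $|P_\bu K|\le\diam(K)$, i.e.\ $W(K)\le\diam(K)$), then invoke Theorem~\ref{2311} and, for sharpness, combine $\diam(K_\delta)\to 2$, Hausdorff continuity of $V_1$, and \eqref{1308}. Your closing remark explaining why the upper bound is not claimed to be sharp is a nice addition not present in the paper.
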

\begin{remark}
    It can also be checked numerically that the upper bound in \eqref{eq:UpperBdDiam} is worse than the right-hand side of~\eqref{1119} in dimensions $d=2$, $d=3$ and $d=4$, but is still better for $d\geq 5$. In fact, it should be observed that, as $d\to\infty$, the constant on the right hand side of \eqref{eq:UpperBdDiam} behaves like $1-{5\over 8d}+O(d^{-2})$, while our upper bound satisfies ${\sqrt{{\pi\over 18}\,d}}+O(d^{-1/2})$.
\end{remark}
\begin{proof}[Proof of Corollary~\ref{cor:Diam}]
For any convex body $K\subset\R^d$ it is possible to find an open interval $I\subset K$ satisfying $|I|=\diam(K)$. Next, we recall that it is well known that the intrinsic volumes are monotone with respect to set inclusion. Therefore,
\begin{align*}
    V_1(K)\geq V_1(I)=|I|=\diam(K),
\end{align*}
which together with Theorem~\ref{2311} implies the lower bound. The upper bound follows from the definition of $V_1(K)$ and the fact that the mean width $W(K)$ satisfies $W(K)\leq\diam(K)$. Indeed, this is a consequence of the observation that the maximal width $\max_{\bu\in\mathbb{S}^{d-1}}|P_{\bu}K|$ of $K$ coincides with the diameter of $K$. This together with~\eqref{1719} and~\eqref{1308} yields the first part of the corollary.

To prove the second one, note that
\begin{align*}
    \lim_{\delta\to 0}\diam(K_\delta)={2}.
\end{align*}
Moreover, since the intrinsic volumes are continuous in Hausdorff metric, we have 
\begin{align*}
    \lim_{\delta\to 0}V_1(K_\delta)=V_1([{-1},1])={2}.
\end{align*}
Combining the last two equations with the left-hand side of~\eqref{1308} gives~\eqref{1706} and completes the argument.
\end{proof}

\section{Proof of Theorem~\ref{2311}}

\subsection{Preliminaries}

Before presenting the proof of Theorem~\ref{2311} we start with some general comments on $\Delta(K)$. It follows from~\eqref{1334} and~\eqref{1339} along with Fubini's theorem that
\begin{align*}
    \E|X_1-X_2|&=\sqrt\pi\frac{\Gamma\big(\frac{d+1}{2}\big)}{\Gamma\big(\frac{d}{2}\big)}\,\E\int_{\mathbb S^{d-1}}|P_\bu X_1-P_\bu X_2|\,\mu(\mathrm d\bu)
    \\
    &=\sqrt\pi\frac{\Gamma\big(\frac{d+1}{2}\big)}{\Gamma\big(\frac{d}{2}\big)}\int_{\mathbb S^{d-1}}\E|P_\bu X_1-P_\bu X_2|\,\mu(\mathrm d\bu).
\end{align*}
Let us fix some $\bu\in\mathbb S^{d-1}$. Again by Fubini's theorem, we see that
\begin{align}\label{929}
    \E|P_\bu X_1-P_\bu X_2|&=\frac{1}{|K|^2}\int_K\int_K|P_\bu\bx_1-P_\bu\bx_2|\,\dd \bx_1 \,\dd \bx_2
    \\\notag
    &=\int_{\inf\limits_{\bx\in K}\langle \bx,\bu\rangle}^{\sup\limits_{\bx\in K}\langle \bx,\bu\rangle}\int_{\inf\limits_{\bx\in K}\langle \bx,\bu\rangle}^{\sup\limits_{\bx\in K}\langle \bx,\bu\rangle}|t_1-t_2|\, \tilde h(t_1)\, \tilde h(t_2)\,\dd t_2\, \dd t_1,
\end{align}
where
\begin{align}\label{eq:hTilde}
    \tilde h(t)=\tilde h_{K,u}(t):=\frac{|K\cap (t\bu+\bu^\perp)|}{|K|}.
\end{align}
Let $L:\R\to\R$ be an affine function which maps the interval $[-1,1]$ to $\big[\inf_{\bx\in K}\langle \bx,\bu\rangle,\sup_{\bx\in K}\langle \bx,\bu\rangle\big]$.
Clearly, the slope of $L$ equals 
\begin{align*}
    \frac{\sup_{\bx\in K}\langle \bx,\bu\rangle-\inf_{\bx\in K}\langle \bx,\bu\rangle}2=\frac{|P_\bu K|}2.
\end{align*}
Changing twice coordinates according to the transformation $L$ allows us in view of~\eqref{929} to conclude that
\begin{align*}
    \E|P_\bu X_1-P_\bu X_2|=\frac{|P_\bu K|}2\int_{-1}^1\int_{-1}^1|t_1-t_2|\, h(t_1)\, h(t_2)\,\dd t_2\, \dd t_1
\end{align*}
with $h(t)$ given by
\begin{align}\label{eq:h}
    h(t)=h_{K,u}(t):=\frac{|P_\bu K|}2\tilde h(Lt),
\end{align}
where $\tilde h$ is as in \eqref{eq:hTilde}.
Introducing the abbreviation
\begin{align}\label{eq:IntegralI(h)}
    I(h):=\frac12\int_{\R}\int_{\R}|t_1-t_2|\, h(t_1)\, h(t_2)\,\dd t_2\,\dd t_1,
\end{align}
we arrive at the identity
\begin{align}\label{2226}
    \E&|P_\bu X_1-P_\bu X_2|=|P_\bu K|\,I(h).
\end{align}

Next, we note that the function $h$ possesses the following four properties, where we write $\supp(h)$ for the support of $h$, \revision{the smallest closed set containing all points $t\in\mathbb{R}$ such that $h(t)\ne 0$}:
\begin{enumerate}
\item[(a)] $h\geq 0$;
\item[(b)] $\supp(h)= [-1,1]$;
\item[(c)] $\int_{\R} h(t)\,\dd t=1$;
\item[(d)] $h^{1/(d-1)}$ is concave on its support. 
\end{enumerate}
The first three properties are evident, while the last one is a direct consequence of Brunn's concavity principle (see, e.g.,~\cite[Theorem~2.3]{aK05}).

Our next two steps are to determine the infimum and supremum of $I(\,\cdot\,)$ over all functions satisfying properties (a)--(d). We will tackle both problems separately in Sections \ref{subsec:LowerBound} (lower bound) and \ref{subsec:UpperBound1}--\ref{subsec:UpperBound3} (upper bound). In Section~\ref{1305} we will show that~\eqref{1308} holds true.


\subsection{The lower bound}\label{subsec:LowerBound}

The crucial ingredient in the getting the lower bound is Riesz's rearrangement inequality. In our paper, we need its one-dimension version only.  To formulate it let us recall the definition of symmetric decreasing rearrangement. To this end, for any non-negative measurable function $f:\R\to\R_+$ and $\tau\geq 0$ denote by $E_\tau=E_{f,\tau}$ its excursion set
\begin{align*}
    E_\tau:=\{t\in\R:f(t)>\tau\}.
\end{align*}
It is straightforward to see that $f$ can be recovered from $E_\tau$:
\begin{align*}
    f(t)=\int_0^\infty\ind_{E_\tau}(t)\,\dd \tau.
\end{align*}
Assuming  that $|E_\tau|<\infty$ for any $\tau\geq0$, denote by $f^*$ the {symmetric decreasing rearrangement} $f^*$ of $f$, which is defined  as
\begin{align*}
    f^*(t):=\int_0^\infty\ind_{\left[-\frac{|E_\tau|}2,\frac{|E_\tau|}2\right]}(t)\,\dd \tau.
\end{align*}
In other words, $f^*$ is a unique even and on the positive half-line decreasing function, whose  level sets have the same measure as the level sets of $f$. Geometrically, the subgraph of $f^*$ is obtained from the subgraph of $f$ by \emph{Steiner symmetrization} with respect to the abscissa.
\begin{remark}\label{1129}
    Since Steiner symmetrization preserves convexity (see, e.g.,~\cite[Proposition~7.1.7]{KP99}) \revision{and a function is concave if and only if its subgraph is convex}, it follows that $f^*$ is concave given $f$ is concave.
\end{remark}
 
The Riesz rearrangement inequality (see, e.g.,~\cite[Section~3.6]{LL01}) states that for any non-negative measurable functions $f_1,f_2,g:\R\to\R_+$ 
with level sets of finite measure we have
 \begin{equation}\label{eq:Riesz}
\begin{split}
&\int_{\R}\int_{\R} f_1(t_1)g(t_1-t_2)f_2(t_2)\,\dd t_1\,\dd t_2\leq \int_{\R}\int_{\R} f_1^*(t_1)g^*(t_1-t_2)f_2^*(t_2)\,\dd t_1\,\dd t_2.
\end{split}
 \end{equation}
 Now, let us take
 \begin{align*}
     g(t)=\max(0,2-|t|)\quad\text{and}\quad f_1=f_2=h
 \end{align*}
 with $h$ given by \eqref{eq:h}.
It is easy to check that with $h$ also $h^*$ satisfies properties~(a)--(d)  listed in the previous section: indeed,~(a)--(c) are due to the basic properties of the function rearrangement, see~\cite[Section~3.3]{LL01}. To show~(d), first note that
\begin{align*}
    \big(h^{1/(d-1)}\big)^*=(h^*)^{1/(d-1)};
\end{align*}
for the proof see~\cite[Section~3.3, Property~(v)]{LL01}. Now~(d) follows from Remark~\ref{1129}.

Clearly, $g^*=g$. Therefore from  property~(b) of $h$ and $h^*$ it follows that
\begin{align*}
    h(t_1)g(t_1-t_2)h(t_2)=(2-|t_1-t_2|) h(t_1)h(t_2)
\end{align*}
and 
\begin{align*}
    h^*(t_1)g^*(t_1-t_2)h^*(t_2)=(2-|t_1-t_2|) h^*(t_1)h^*(t_2).
\end{align*}
Applying Riesz’s inequality \eqref{eq:Riesz} and noting that by property~(c),
\begin{align*}
    \int_{\R}\int_{\R} 2h(t_1)h(t_2)\,\dd t_1\,\dd t_2= \int_{\R}\int_{\R}2 h^*(t_1)h^*(t_2)\,\dd t_1\,\dd t_2=2,
\end{align*}
we conclude that
\begin{align*}
    I(h)\geq I(h^*),
\end{align*}
where we recall that $I(h)$ is given by \eqref{eq:IntegralI(h)}. Thus, from this moment on we can and will assume that $h$ is an even function. We will also use the notation
\begin{equation}\label{eq:HTilde}
    \tilde H(t):=\int_0^{t}h(s)\,\dd s
\end{equation}
in what follows.

\begin{lemma}\label{lem:LB1}
Let $h:\R\to\R$ be an even function satisfying (a)-\revision{(d)}. Then $I(h)=\frac12-2\int_0^1\tilde H^2(t)\,\dd t$.
\end{lemma}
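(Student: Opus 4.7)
The plan is to use a layer-cake representation of the absolute value to reduce the double integral defining $I(h)$ to a single integral in the cumulative distribution function $H(s) := \int_{-1}^{s} h(u)\, \dd u$, and then to use the evenness of $h$ to rewrite the result in terms of $\tilde H$. Properties (a)--(c) simply assert that $h$ is a probability density supported in $[-1,1]$; the concavity assumption (d) will play no role in the proof of this lemma.

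First I would write
\begin{align*}
|t_1-t_2| = \int_{\R}\bigl(\ind_{\{s < t_1\}} - \ind_{\{s < t_2\}}\bigr)^2\, \dd s,
\end{align*}
which holds because the integrand is the indicator of the open interval with endpoints $t_1$ and $t_2$. Substituting this into \eqref{eq:IntegralI(h)} and applying Fubini's theorem, together with $\int_{\R}\ind_{\{s<t\}}\, h(t)\, \dd t = 1 - H(s)$ and the identity $(a-b)^2 = a + b - 2ab$ for $\{0,1\}$-valued quantities, one obtains
\begin{align*}
I(h) = \int_{-1}^{1} H(s)\bigl(1-H(s)\bigr)\, \dd s.
\end{align*}

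Next, I would exploit the evenness of $h$: since $\int_{-1}^{1}h(u)\,\dd u = 1$, we have $H(0) = \tfrac12$, and $H(s) - \tfrac12 = \tilde H(s)$ for $s \in [0,1]$, while the change of variables $u \mapsto -u$ gives $H(s) - \tfrac12 = -\tilde H(-s)$ for $s \in [-1,0]$; in particular $\bigl(H(s)-\tfrac12\bigr)^2$ is an even function of $s$ that equals $\tilde H(|s|)^2$. Writing $H(1-H) = \tfrac14 - (H-\tfrac12)^2$ and integrating over $[-1,1]$ then yields
\begin{align*}
I(h) = \frac12 - 2\int_{0}^{1} \tilde H(t)^2\, \dd t,
\end{align*}
as claimed.

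The only subtle point is the careful bookkeeping of the sign of $H-\tfrac12$ on the two halves of $[-1,1]$; no serious obstacle arises. In fact, the lemma is essentially a restatement of the classical Hoeffding-type identity $\E|Y_1-Y_2| = 2\int_{\R} H(s)(1-H(s))\,\dd s$ for i.i.d.\ real random variables $Y_1,Y_2$ with cumulative distribution function $H$, combined with the symmetry of $h$.
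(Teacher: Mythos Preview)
Your proof is correct. Both you and the paper arrive at the intermediate identity
\[
I(h) = \int_{-1}^{1} H(s)\bigl(1-H(s)\bigr)\,\dd s
\]
and then exploit evenness in essentially the same way, but you reach this identity by a genuinely different route. The paper proceeds via two successive integrations by parts in the inner integral of $I(h)$, whereas you use the layer-cake representation $|t_1-t_2|=\int_\R(\ind_{\{s<t_1\}}-\ind_{\{s<t_2\}})^2\,\dd s$ and a single application of Fubini's theorem. Your approach is slightly more direct (no integration by parts at all) and, as you note, makes transparent the link to the classical Hoeffding identity $\E|Y_1-Y_2|=2\int_\R H(s)(1-H(s))\,\dd s$; the paper's computation is more hands-on but equally short. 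In the second half, your algebraic device $H(1-H)=\tfrac14-(H-\tfrac12)^2$ together with the evenness of $(H-\tfrac12)^2$ is a minor repackaging of the paper's substitution $H=\tfrac12+\tilde H$ on $[0,1]$ combined with $H(-t)=1-H(t)$; the content is identical.
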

\begin{proof}
We start by noting that
\begin{align*}
    I(h)=\int_{\R}\int_{-\infty}^{t_1}(t_1-t_2)\, h(t_1)\, h(t_2)\,\dd t_2 \,\dd t_1.
\end{align*}
Integration-by-parts thus leads to
\begin{align*}
    \int_{-\infty}^{t_1}t_1\, h(t_1)\, h(t_2)\,\dd t_2=t_1\,h(t_1)\,H(t_1)
\end{align*}
and
\begin{align*}
    \int_{-\infty}^{t_1}t_2\, h(t_1)\, h(t_2)\,\dd t_2=h(t_1)\,\bigg(t_1\,H(t_1)-\int_{-\infty}^{t_1}H(t_2)\dd t_2\bigg),
\end{align*}
where we put
$
    H(t):=\int_{-\infty}^{t}h(s)\,\dd s.
$
As a consequence,
\begin{align*}
    I(h)=\int_{\R}h(t_1)\int_{-\infty}^{t_1}H(t_2)\,\dd t_2\,\dd t_1.
\end{align*}
Again applying integration-by-parts and property (c) in the first and property~(b) in the last step gives
\begin{align*}
    I(h)&=
    \int_{\R}H(t_2)\,\dd t_2-\int_{\R}H(t_1)\,H(t_1)\,\dd t_1
    \\
    &=\int_{\R} H(t)(1-H(t))\,\dd t\\
    &=\int_{-1}^1 H(t)(1-H(t))\,\dd t.
\end{align*}
Since $h$ is even, we have
\begin{align*}
    H(0)=\frac{1}{2}\qquad\text{and}\qquad H(-t)=1-H(t).
\end{align*}
Therefore, recalling the definition of $\tilde H(t)$
we see that
\begin{equation}\label{1335}
\begin{split}
    I(h)&=2\int_0^1 H(t)(1-H(t))\,\dd t
    \\
    &=2\int_0^1\bigg(\frac12+ \tilde H(t)\bigg)\bigg(\frac12- \tilde H(t)\bigg)\,\dd t\\
    &=\frac12-2\int_0^1\tilde H^2(t)\,\dd t.
\end{split}    
\end{equation}
The argument is thus complete.
\end{proof}

Next, we consider the function
\begin{align}\label{2217}
    h_0(t):=
    \begin{cases}
        \frac d2\left(1-|t|\right)^{d-1}&:|t|\leq1,
        \\
        0 &:|t|\geq1.
    \end{cases}
\end{align}

\begin{lemma}\label{lem:LB2}
The function $h_0$ satisfies properties (a)-(d) and $I(h_0)=\frac{3d+1}{2(d+1)(2d+1)}$.
\end{lemma}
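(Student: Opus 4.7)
The plan is straightforward: verify the four defining properties of $h_0$ and then apply Lemma~\ref{lem:LB1} after computing $\tilde H$ in closed form.

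First, properties (a) and (b) are immediate from the definition of $h_0$ in \eqref{2217}. For property~(c), I would compute
\begin{align*}
\int_{\R} h_0(t)\,\dd t = 2\int_0^1 \tfrac{d}{2}(1-t)^{d-1}\,\dd t = 1,
\end{align*}
by the substitution $s=1-t$. For property~(d), observe that on $[-1,1]$ we have
\begin{align*}
h_0(t)^{1/(d-1)} = \bigl(\tfrac{d}{2}\bigr)^{1/(d-1)}(1-|t|),
\end{align*}
which is a positive multiple of the concave tent function $1-|t|$ and hence concave on its support.

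Next, since $h_0$ is even it falls under the hypothesis of Lemma~\ref{lem:LB1}. I would compute the antiderivative $\tilde H$ from \eqref{eq:HTilde} explicitly: for $t\in[0,1]$,
\begin{align*}
\tilde H(t) = \int_0^t \tfrac{d}{2}(1-s)^{d-1}\,\dd s = \tfrac{1}{2}\bigl(1-(1-t)^d\bigr).
\end{align*}
Squaring, expanding, and integrating term by term on $[0,1]$ gives
\begin{align*}
\int_0^1 \tilde H(t)^2\,\dd t = \tfrac{1}{4}\Bigl(1 - \tfrac{2}{d+1} + \tfrac{1}{2d+1}\Bigr).
\end{align*}

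Finally, plugging this into the formula $I(h_0) = \tfrac{1}{2} - 2\int_0^1 \tilde H(t)^2\,\dd t$ from Lemma~\ref{lem:LB1} and simplifying the resulting rational expression via the common denominator $(d+1)(2d+1)$ yields
\begin{align*}
I(h_0) = \tfrac{1}{2}\Bigl(\tfrac{2}{d+1} - \tfrac{1}{2d+1}\Bigr) = \frac{3d+1}{2(d+1)(2d+1)},
\end{align*}
as claimed. There is no real obstacle here: the only thing to be slightly careful about is the algebraic simplification at the end, which should be done over the common denominator $(d+1)(2d+1)$ to avoid arithmetic slips.
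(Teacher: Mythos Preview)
Your proof is correct and follows essentially the same approach as the paper: verify (a)--(d), compute $\tilde H_0(t)=\tfrac12(1-(1-t)^d)$, and apply Lemma~\ref{lem:LB1}. The only cosmetic difference is that the paper evaluates $\int_0^1 \tilde H_0^2(t)\,\dd t$ via a substitution leading to the Beta integral $B(1/d,3)$, whereas you expand the square and integrate each power of $(1-t)$ directly; both yield the same value.
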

\begin{proof}
It is straightforward that $h_0$ possesses properties~(a)--(d). To compute $I(h_0)$ we put
\begin{align*}
    \tilde H_0(t):=\int_{0}^{t}h_0(s)\,\dd s.
\end{align*}
Using the substitution $1-s=u$, we see that
\begin{align*}
\tilde H_0(t) = {d\over 2}\int_0^t(1-s)^{d-1}\,\dd s = {d\over 2}\int_{1-t}^1 u^{d-1}\,\dd u = {1\over 2}\big(1-(1-t)^d\big).
\end{align*}
As a consequence, applying the substitutions $u=1-t$ and $u^d=v$ we see that
\begin{align*}
\int_0^1\tilde H_0^2(t)\,\dd t &= {1\over 4}\int_0^1\big(1-(1-t)^d\big)^2\,\dd t = {1\over 4}\int_0^1(1-u^d)^2\,\dd u\\
&={1\over 4d}\int_0^1(1-v)^2v^{{1\over d}-1}\,\dd v.
\end{align*}
The last integral is known as the Euler Beta function $B(1/d,3)$ and thus simplifies to
\begin{align*}
{1\over 4d}{\Gamma({1\over d})\Gamma(3)\over\Gamma(3+{1\over d})} = {1\over 2d}{\Gamma({1\over d})\over\big(2+{1\over d}\big)\big(1+{1\over d}\big){1\over d}\Gamma({1\over d})} = {d^2\over 2(2d+1)(d+1)}.
\end{align*}
As a consequence, using Lemma \ref{lem:LB1} we find that
\begin{align}\label{2218}
    I(h_0)={1\over 2}-2\int_0^1\tilde H_0^2(t)\,\dd t=\frac{3d+1}{2(d+1)(2d+1)}.
\end{align}
This completes the proof of the lemma.
\end{proof}

Out task is now to show that for any even function $h:\R\to\R$ which satisfies properties (a)--(d) and is different from $h_0$ we have that $I(h)>I(h_0)$.

\begin{lemma}\label{lem:LB3}
Let $h:\R\to\R$ be an even function satisfying (a)-(d). \revision{If $h$ differs from $h_0$ on a set of positive Lebesgue measure, then} $I(h)>I(h_0)$.
\end{lemma}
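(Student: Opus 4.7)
The plan is to reduce the claim, via Lemma~\ref{lem:LB1}, to the strict inequality
\begin{equation*}
    \int_0^1 \tilde H(t)^2\, \dd t < \int_0^1 \tilde H_0(t)^2\, \dd t,
\end{equation*}
which I intend to establish by a pointwise comparison $\tilde H(t) \le \tilde H_0(t)$ on $[0,1]$, strict on a set of positive measure.

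The first step would be a bound at the origin for $g := h^{1/(d-1)}$, compared with $g_0(t) := h_0(t)^{1/(d-1)} = (d/2)^{1/(d-1)}(1-|t|)$. Since $g$ is concave on $[-1,1]$, even, and satisfies $g(1)\ge 0$, the chord inequality on $[0,1]$ yields $g(t) \ge g(0)(1-t)$; raising this to the $(d-1)$-st power and integrating then gives
\begin{equation*}
    \tfrac{1}{2} \;=\; \int_0^1 g(s)^{d-1}\, \dd s \;\ge\; \frac{g(0)^{d-1}}{d},
\end{equation*}
so $g(0) \le g_0(0) = (d/2)^{1/(d-1)}$. Equality would force $g(t) \ge g_0(t)$ pointwise via the same chord argument, and the matching integrals would then force $g \equiv g_0$, i.e.\ $h = h_0$. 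Hence whenever $h$ differs from $h_0$ on a set of positive Lebesgue measure, $g(0) < g_0(0)$ strictly.

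Next I would analyse the sign of $g - g_0$ on $[0,1]$. As $g_0$ is affine, $g - g_0$ is concave on $[0,1]$, with $(g-g_0)(0) < 0$ and $(g-g_0)(1) = g(1) \ge 0$. A concave function with these boundary values is unimodal and therefore changes sign exactly once: there is $s_0 \in (0,1)$ with $g \le g_0$ on $[0,s_0]$ and $g \ge g_0$ on $[s_0,1]$. The degenerate case $s_0 = 1$ is excluded because it would give $g \le g_0$ everywhere, contradicting $\int_0^1 g^{d-1} = \int_0^1 g_0^{d-1} = 1/2$. Since $x \mapsto x^{d-1}$ is strictly increasing on $[0,\infty)$, this sign pattern passes to $h - h_0$. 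Setting $E(t) := \tilde H(t) - \tilde H_0(t) = \int_0^t (h-h_0)(s)\, \dd s$, one has $E(0) = E(1) = 0$, with $E$ non-increasing on $[0,s_0]$ and non-decreasing on $[s_0,1]$, whence $E \le 0$ on $[0,1]$.

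Since $\tilde H, \tilde H_0 \ge 0$, this yields $\tilde H^2 \le \tilde H_0^2$, hence $\int_0^1 \tilde H^2\, \dd t \le \int_0^1 \tilde H_0^2\, \dd t$. For strictness, continuity of the concave function $g$ at the origin gives $h(0) = g(0)^{d-1} < g_0(0)^{d-1} = h_0(0)$, and then by continuity of $h$ and $h_0$ near $0$ one obtains $h < h_0$ on some $[0,\varepsilon)$, so $E < 0$ strictly on $(0, \varepsilon]$; consequently $\tilde H < \tilde H_0$ on an open set and the squared-integral comparison becomes strict. The step I expect to require the most care is the single-crossing analysis of $g - g_0$: one must carefully invoke the concavity of $g$, the sign of $g(1)$, and the integral normalisation to pin down that the sign pattern of $h - h_0$ is precisely \emph{non-positive then non-negative} on $[0,1]$ and that the crossing point does not degenerate to the endpoint $1$.
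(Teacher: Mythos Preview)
Your argument is correct and follows essentially the same route as the paper: reduce via Lemma~\ref{lem:LB1} to comparing $\int_0^1 \tilde H^2$ with $\int_0^1 \tilde H_0^2$, use concavity of $g-g_0=h^{1/(d-1)}-h_0^{1/(d-1)}$ together with the boundary values and the integral constraint to obtain the single-crossing sign pattern for $h-h_0$, and deduce $\tilde H\le\tilde H_0$ with strictness from continuity. The only cosmetic differences are that the paper transfers the sign from $g-g_0$ to $h-h_0$ via the factorization $h-h_0=(g-g_0)\sum_i g^{i/(d-1)}g_0^{(d-1-i)/(d-1)}$ rather than monotonicity of $x\mapsto x^{d-1}$, and it obtains $(g-g_0)(0)<0$ by direct contradiction rather than through your preliminary chord-inequality bound on $g(0)$.
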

\begin{proof}
We start by noting that proving $I(h)>I(h_0)$ is in view of~\eqref{1335} equivalent to proving that
\begin{align}\label{1315}
    \int_0^1\tilde H_0^2(t)\,\dd t>\int_0^1\tilde H^2(t)\,\dd t,
\end{align}
\revision{where $\tilde H_0$ is defined by $h_0$ the same way as $\tilde H$ by $h$ in~\eqref{eq:HTilde}.}
For that purpose, we represent the difference $h-h_0$ as
\begin{equation}\label{1306}
\begin{split}
    h-h_0&=\big(h^{1/(d-1)}-h^{1/(d-1)}_0\big)\bigg(\sum_{i=0}^{d-1}h^{i/(d-1)}h_0^{(d-1-i)/(d-1)}\bigg),
\end{split}    
\end{equation}
where 
\begin{align}\label{1307}
    \sum_{i=0}^{d-1}h^{i/(d-1)}h_0^{(d-1-i)/(d-1)}
\end{align}
is a positive function on $(-1,1)$.
By property~(d), $h^{1/(d-1)}$ is concave on $[0,1]$, while $h^{1/(d-1)}_0$ is linear (and hence convex) on this interval. Therefore,
$
    h^{1/(d-1)}-h_0^{1/(d-1)}
$
is a concave function on $[0,1]$.

Note that
$
    h^{1/(d-1)}(1)\geq 0=h^{1/(d-1)}_0(1),
$
and if $(h^{1/(d-1)}-h^{1/(d-1)}_0)(0)\ge 0$ we conclude that $h^{1/(d-1)}-h^{1/(d-1)}_0\ge 0$ on $[0,1]$ due to concavity. This would mean that $h-h_0\ge 0$ on $[0,1]$, which leads to contradiction for $h\neq h_0$ because of properties (a) and (c). Thus, it follows that there exists $t_0\in(0,1)$ such that 
\begin{align*}
    h^{1/(d-1)}-h_0^{1/(d-1)}\leq 0\quad\text{on}\quad [0,t_0]
\end{align*}
and
\begin{align*}
    h^{1/(d-1)}-h_0^{1/(d-1)}\geq 0\quad\text{on}\quad [t_0,1].
\end{align*}
By~\eqref{1306} and~\eqref{1307} the same holds for $h-h_0$ as well, that is,
\begin{align*}
    h-h_0\leq 0\quad\text{on}\quad [0,t_0]\quad\text{and}\quad h-h_0\geq 0\quad\text{on}\quad [t_0,1].
\end{align*}
Thus, \revision{$\tilde H-\tilde H_0$} is non-increasing on $[0,t_0]$ and non-decreasing on  $[t_0,1]$, and since \revision{$\tilde H(0)= \tilde H_0(0), \tilde H(1)=\tilde H_0(1)$}, it follows that \revision{$\tilde H- \tilde H_0$}  is non-positive on $[0,1]$. This implies \revision{a non-strict version of~\eqref{1315}, and since $h$ is different from $h_0$ on a set of positive Lebesgue measure, $\tilde H- \tilde H_0$ does not vanish identically, which together with its  continuity means that the inequality is strict. }
\end{proof}

\begin{proof}[Proof of Theorem \ref{2311}, lower bound in \eqref{1719}]
\revision{A non-strict version is now a direct consequence of Lemma \ref{lem:LB1} -- Lemma \ref{lem:LB3}.  To get a strict lower bound it is enough to show that there is no convex body for which the function $h$ is equal to $h_0$ for almost all directions $u$. Indeed, assume the opposite. Then it follows from~\eqref{eq:hTilde} and \eqref{eq:h} that for almost all directions $\bu$,  the function
\begin{align*}
     g(t):=|K\cap (t\bu+\bu^\perp)|^{1/(d-1)}
\end{align*}
is symmetric on the interval
\begin{align*}
    [a,b]= \{t\in\R^1\,:\,K\cap (t\bu+\bu^\perp)\ne \emptyset\}
\end{align*}
and linear on each of its two halves.
Let
\begin{align*}
    \tilde K:=\conv\left(\{\bx_a\}\cup\{\bx_b\}\cup \left(K\cap \left(\frac{a+b}{2}\bu+\bu^\perp\right)\right)\right),
\end{align*}
where $\bx_a,\bx_b$ are any points satisfying $\bx_a\in K\cap (a\bu+\bu^\perp)$ and $\bx_b\in K\cap (b\bu+\bu^\perp)$. It is straightforward that the function
\begin{align*}
     \tilde g(t):=|\tilde K\cap (t\bu+\bu^\perp)|^{1/(d-1)}
\end{align*}
is also  symmetric on $[a,b]$ and linear on each of its two halves. Thus using that $g,\tilde g$ vanish at $a,b$ and that $g(\frac{a+b}{2})=\tilde g(\frac{a+b}{2})$ we conclude that $g=\tilde g$, which due to Fubini's theorem implies that 
 $|K|=|\tilde K|$, and since  $\tilde K\subset K$ and $K,\tilde K$ are closed, we have that $K=\tilde K$. It means that the orthogonal projection of $K$ onto any $2$-plane passing through $\bu$ is a quadrangle with one of the diagonal \emph{orthogonal} to $\bu$. But of course this cannot hold for almost all $\bu$. 
}
\end{proof}

\subsection{The upper bound I: Existence of maximizers}\label{subsec:UpperBound1}

Our goal in this section and in Sections \ref{subsec:UpperBound2} and \ref{subsec:UpperBound3} below is to maximize the quantity
\begin{align}\label{eq:IntegralI(h)2}
    I(h):=\frac12\int_{\R}\int_{\R}|t_1-t_2|\, h(t_1)\, h(t_2)\,\dd t_2\,\dd t_1,
\end{align}
under the conditions
\begin{enumerate}
\item[(a)] $h\geq 0$;
\item[(b)] $\supp(h)= [-1,1]$;
\item[(c)] $\int_{\R} h(t)\,\dd t=1$;
\item[(d)] $h^{1/(d-1)}$ is concave on its support. 
\end{enumerate}
For this, we proceed in several steps and the strategy can roughly be summarized as follows. First, we shall argue that within the class of functions satisfying (a)-(d) the supremum of the functional $I(\,\cdot\,)$ is in fact attained. Then we show that for a maximizer $h$ the function $h^{1/(d-1)}$ is necessarily affine on its support, from which we eventually obtain the upper bound.

\begin{lemma} \label{lem:0916}
    Fix $0<c<C$, and let $(f_i)_{i\in\N}$ be a sequence of functions satisfying
    \begin{enumerate}
    \item[(a')] $f\revision{_i}\geq 0$;
    \item[(b')] $\supp(f\revision{_i})= [-1,1]$;
    \item[(c')] $f\revision{_i}(x)\leq C$ for all $x\in[-1,1]$ and there exists some $x\revision{_i}\in[-1,1]$ such that $c\leq f\revision{_i}(x\revision{_i})$;
    \item[(d')] $f\revision{_i}$ is concave on its support. 
    \end{enumerate}
    There exists a function $f$ satisfying (a')-(d') and subsequence $(f_{i_j})_{j\in\N}$ such that $f_{i_j} \to f$ in the $L_1$-norm, as $j\to\infty$.
\end{lemma}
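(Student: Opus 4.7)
The plan is to extract a subsequence converging uniformly on every compact subset of $(-1,1)$ via Arzelà--Ascoli and a diagonal argument, pass to the limit on $(-1,1)$, extend the limit to the endpoints, and upgrade pointwise convergence to $L^1$ via dominated convergence. The roles of the hypotheses: the upper bound $C$ in (c') together with concavity (d') forces a uniform Lipschitz bound on every compact subset of $(-1,1)$; the lower bound $c$ in (c') prevents the limit from collapsing to zero; (a'), (b') describe the shape of the support.

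\textbf{Equi-Lipschitz estimate and extraction.} First pass to a subsequence so that $x_i\to x^*\in[-1,1]$. The standard supporting-line inequalities for a concave function $g:[-1,1]\to[0,C]$ evaluated at the endpoints yield that for every $x\in(-1,1)$, both the left and right derivatives of $g$ lie in $[-C/(1-x),\,C/(1+x)]$. Hence the family $(f_i)$ is uniformly Lipschitz on each $[-1+\varepsilon,1-\varepsilon]$ with constant $C/\varepsilon$. Applying Arzelà--Ascoli on $[-1+1/n,1-1/n]$ for every $n\in\N$, together with a diagonal extraction, produces a subsequence $(f_{i_j})$ converging uniformly on every compact subset of $(-1,1)$ to a function $f:(-1,1)\to[0,C]$. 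As a pointwise limit of concave functions, $f$ is concave on $(-1,1)$. Extend $f$ to $[-1,1]$ by setting $f(\pm 1):=\lim_{x\to\mp 1^\mp}f(x)$, one-sided limits that exist in $[0,C]$ because a bounded concave function on an open interval is monotone near each endpoint. The extension is concave on $[-1,1]$ with $0\leq f\leq C$, so (a'), (d') and the upper bound in (c') hold.

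\textbf{Support, lower bound and $L^1$ convergence.} For each $i$, concavity of $f_i$ together with $f_i(\pm 1)\geq 0$ and $f_i(x_i)\geq c$ yields the triangular lower bound
\begin{align*}
    f_i(x)\geq c\min\left\{\frac{1+x}{1+x_i},\,\frac{1-x}{1-x_i}\right\},\qquad x\in[-1,1].
\end{align*}
Passing to the limit along $(f_{i_j})$ gives an analogous strictly positive lower bound for $f$ on $(-1,1)$, whence $\supp(f)=[-1,1]$ and (b') holds. For the lower bound in (c'): if $x^*\in(-1,1)$, uniform convergence near $x^*$ gives $f(x^*)\geq c$; if $x^*\in\{-1,1\}$, the triangular estimate forces the one-sided limit of $f$ at that endpoint, which is the boundary value by our extension, to be at least $c$, so $f(x^*)\geq c$ again. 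In either case (c') is satisfied. Finally, since $0\leq f_{i_j}\leq C$ on the bounded interval $[-1,1]$ and $f_{i_j}\to f$ pointwise almost everywhere, dominated convergence yields $f_{i_j}\to f$ in $L^1$.

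\textbf{Main obstacle.} The real subtlety is the degenerate case $x^*\in\{-1,1\}$: Arzelà--Ascoli only provides pointwise convergence on the open interval $(-1,1)$, while the witnessing points for the lower bound $c$ drift to the boundary. The triangular bound above transfers this lower bound to a one-sided boundary limit of $f$, which is precisely what the boundary extension is designed to capture.
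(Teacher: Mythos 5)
Your argument is correct and follows essentially the same route as the paper's: a uniform Lipschitz bound on $[-1+\varepsilon,1-\varepsilon]$ derived from concavity and the bound $C$, Arzel\`a--Ascoli with a diagonal extraction, extension to the endpoints by one-sided limits, and uniform boundedness to upgrade to $L^1$-convergence. The only cosmetic difference is in verifying the lower bound of~(c') for the limit: you pass to a sub-subsequence with $x_i\to x^*$ and use the triangular pointwise bound, whereas the paper argues by contradiction at the $L^1$ level using the same triangular mass estimate; both are valid and essentially equivalent.
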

\begin{proof}
    \revision{The  functions $f_i$ are  concave  and  take values in $[0,C]$. Hence, they are continuous on the interval $(-1,1)$. Take some $\epsilon > 0$.  By concavity, the Lipshitz constants of the functions $f_i$ on the interval $[-1 +\epsilon,1-\epsilon]$ are uniformly bounded by some $B=B(C,\epsilon)$. Indeed, if we would have $f_i(x)-f_i(y)> B(x-y)$ for some $x > y$ in the interval $[-1 +\epsilon,1-\epsilon]$, then by concavity this would imply that $f_i(-1)$ should become negative (if $B$ is chosen sufficiently large),  which is a contradiction.  Similarly, $f_i(x)-f_i(y)<-B(x-y)$ would imply that $f_i(+1)$  must  be  negative,  again  a  contradiction.   Thus,  the  functions $f_i$ are  equicontinuous.   By  the  theorem  of  Arzela-Ascoli,  there  is  a subsequence converging uniformly on $[-1 +\epsilon,1-\epsilon]$.  Such a sequence exists for every $\epsilon= 1/2,1/3,1/4,\ldots$, so by a diagonal argument there is a subsequence of the $f_i$’s converging uniformly on all intervals $[-1 +\epsilon,1-\epsilon]$,  for  all $\epsilon >0$. 
    
    Let $f$ be the function continuous on its support $[-1,1]$ defined as the pointwise limit of the $f_{i_j}$'s in $(-1,1)$ and extended by continuity at $-1$ and $1$.
    The continuity of $f$ in the interior of its support follows from the uniform limit theorem and the continuity of the $f_{i_j}$'s on any interval $[-1+\epsilon,1-\epsilon]$.
    The fact that we can extend continuously the function on the boundary of its support is possible because the functions are uniformly bounded.
    This construction implies directly that $f$ satisfies properties (a'), (b'), (d') and the first part of (c').
    The fact that the functions are uniformly bounded implies also the $L_1$-convergence.
    
    It remains only to show that $f$ satisfies the second part of (c').
    Assume this is not the case.
    Let $\delta>0$ such that $f\leq c-\delta$.
    Such $\delta$ exists since $f$ reaches its supremum by continuity on the compact $[-1,1]$.
    Observe that we can pick $\epsilon>0$ small enough such that $\|f_i \ind_{[c-\delta,\infty)}\|_{L_1} \geq \epsilon$.
    This follows from the concavity of $f_i$ on $[-1,1]$ and the lower bounds $f_i(x_i)\geq c$ and $f_i\geq 0$.
    Therefore $\|f_i-f\|_{L_1} \geq \|f_i \ind_{[c-\delta,c]} \|_{L_1} \geq \epsilon$ which contradicts the $L_1$-convergence.}
\end{proof}
\revision{Note that in the proof above the claim on the uniform equicontinuity is incorrect for $\epsilon= 0$ because of the counterexample in which the function $f_i$ has slope $i$ on the interval $[-1,-1+1/i]$ (and is constant 1 elsewhere).}

\begin{lemma} \label{lem:0917}
    Let $h$ be a function satisfying the conditions (a)-(d). Then the function $f:=h^{1/(d-1)}$ satisfies the conditions (a')-(d') with the constants $c=2^{-1/(d-1)}$ and $C=2^{\frac{d-2}{d-1}}$.
\end{lemma}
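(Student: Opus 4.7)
The plan is to reduce the claim to proving (c'), since for $f := h^{1/(d-1)}$ the conditions (a'), (b'), (d') are immediate: non-negativity, support, and concavity are preserved by the transformation, and (d') is literally the hypothesis (d). What remains is the two-sided bound $c \leq \sup f \leq C$ with $c = 2^{-1/(d-1)}$ and $C = 2^{(d-2)/(d-1)}$.

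For the existence of a point $x \in [-1,1]$ with $f(x) \geq c$, I would use only the $L^1$-normalization (c) together with the length of the support. Since $h \leq \|h\|_\infty$ pointwise on $[-1,1]$,
\[
1 \;=\; \int_{-1}^1 h(t)\,\dd t \;\leq\; 2\|h\|_\infty,
\]
so $\|h\|_\infty \geq 1/2$, and hence $\|f\|_\infty = \|h\|_\infty^{1/(d-1)} \geq 2^{-1/(d-1)} = c$. By (d), $f$ is concave and therefore continuous on $(-1,1)$, so this supremum is actually attained at some point of $[-1,1]$.

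For the upper bound $\sup f \leq C$, the strategy is to sandwich $\int_{-1}^1 f$ between $\sup f$ (from below, by concavity) and $2^{(d-2)/(d-1)}$ (from above, by H\"older together with (c)). If $M := f(x_0) = \sup f$, then concavity of $f$ combined with $f \geq 0$ (so $f(\pm 1) \geq 0$) forces the graph of $f$ to lie above the tent function that interpolates linearly between the points $(-1, 0)$, $(x_0, M)$ and $(1, 0)$. The area of this tent equals $M$ for \emph{every} choice of $x_0 \in [-1,1]$, giving $\int_{-1}^1 f \geq M$. On the other hand, H\"older's inequality with exponents $d-1$ and $(d-1)/(d-2)$ (the latter interpreted as $\infty$ when $d = 2$), combined with $\int_{-1}^1 f^{d-1} = \int_{-1}^1 h = 1$, yields
\[
\int_{-1}^1 f(t)\,\dd t \;\leq\; \Bigl(\int_{-1}^1 f(t)^{d-1}\,\dd t\Bigr)^{1/(d-1)} \cdot 2^{(d-2)/(d-1)} \;=\; 2^{(d-2)/(d-1)}.
\]
Chaining the two estimates delivers $M \leq C$.

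I do not expect a genuine obstacle here. The only real design choice is the pairing of the concave-to-tent lower bound with H\"older: the exponent $2^{(d-2)/(d-1)}$ appearing in H\"older comes precisely from the support having length $2$ and the conjugate exponent being $(d-1)/(d-2)$, which matches the target $C$ exactly. The resulting $C$ is not sharp — applying the tent comparison in both directions instead would give the stronger value $(d/2)^{1/(d-1)}$ — but since Lemma~\ref{lem:0916} only requires some uniform upper bound, there is no reason to push for the optimal constant here.
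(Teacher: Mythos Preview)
Your proof is correct and follows essentially the same approach as the paper: (a'), (b'), (d') are immediate; the lower bound comes from $\int_{-1}^1 h = 1$ forcing $\sup h \geq 1/2$; and the upper bound pairs the tent comparison $\int_{-1}^1 f \geq \sup f$ (from concavity and non-negativity) with the H\"older estimate $\int_{-1}^1 f \leq 2^{(d-2)/(d-1)}$. The only cosmetic difference is that the paper phrases the lower bound as a direct contradiction (``if $h(x)<1/2$ for all $x$ then $\int h < 1$'') rather than via attainment of the supremum; your appeal to continuity on $(-1,1)$ to guarantee attainment is slightly loose, since a concave function on $[-1,1]$ can drop at the endpoints, but the conclusion is easily salvaged by the paper's contradiction argument.
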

\begin{proof}
    Conditions (a'), (b') and (d') are trivially checked and it remains only to prove that $f$ satisfies $(c')$.
    
    First we show that there exists $x\in[-1,1]$ such that $f(x)\geq c$. Otherwise we would have $h(x)<1/2$ for all $x\in[-1,1]$ and this would contradict (c).
    
    Second, by Hölder's inequality, we have
    \begin{align*}
        \| f \|_1 
        \leq \| \ind\{ \, \cdot \in [-1,1]\} \|_{\frac{d-1}{d-2}} \| f \|_{d-1} 
        = C \| h \|_{1}^{\frac{1}{d-1}} 
        = C ,
    \end{align*}
    where the last equality follows from (c).
    Now let $x\in[-1,1]$ be such that $f(x)$ is maximal. By (a'), (b') and (d') we have that $f$ is greater than the continuous piecewise affine function $g$ which is zero outside the interval $(-1,1)$, affine on both $[-1,x]$ and $[x,1]$, and equals $f(x)$ at $x$.
    In particular
    \begin{align*}
        \| f \|_1 \geq \|g\|_1 = f(x) .
    \end{align*}
    Combining the last two displayed equations gives $f(x) \leq C$.
    This concludes the proof.
\end{proof}

\begin{lemma}
    Within the set of functions satisfying (a)-(d) the suppremum of the functional $I(\,\cdot\,)$ given by \eqref{eq:IntegralI(h)2} is attained.
\end{lemma}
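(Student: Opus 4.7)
The plan is to combine the two preceding lemmas with a standard compactness argument applied to a maximizing sequence. First, I would observe that the functional $I(\,\cdot\,)$ is bounded above on the class of functions satisfying (a)--(d): indeed, for any such $h$ one has $\supp(h)\subset[-1,1]$, so $|t_1-t_2|\le 2$ and property~(c) gives $I(h)\le 1$. Hence $M:=\sup I(h)\in[0,1]$ is finite, and one can pick a maximizing sequence $(h_n)_{n\in\mathbb{N}}$ of functions satisfying (a)--(d) with $I(h_n)\to M$.

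Next, set $f_n:=h_n^{1/(d-1)}$. By Lemma~\ref{lem:0917} each $f_n$ satisfies the conditions~(a')--(d') with the fixed constants $c=2^{-1/(d-1)}$ and $C=2^{(d-2)/(d-1)}$. Lemma~\ref{lem:0916} therefore yields a subsequence $(f_{n_j})_{j\in\mathbb{N}}$ converging in $L_1$ to some function $f$ which again satisfies (a')--(d'). Defining the candidate maximizer by $h:=f^{d-1}$, properties (a), (b) and (d) for $h$ follow immediately from (a'), (b') and (d') for $f$.

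To verify property~(c) and the convergence $I(h_{n_j})\to I(h)$, I would use that all $f_{n_j}$ and $f$ are uniformly bounded by $C$, so all $h_{n_j}$ and $h$ are uniformly bounded by $C^{d-1}$. Writing
\begin{align*}
    |h_{n_j}-h|=|f_{n_j}^{d-1}-f^{d-1}|\leq (d-1)\,C^{d-2}\,|f_{n_j}-f|,
\end{align*}
the $L_1$-convergence of $f_{n_j}$ to $f$ implies $L_1$-convergence of $h_{n_j}$ to $h$ on $[-1,1]$. In particular $\int h=\lim_j\int h_{n_j}=1$, giving~(c). For the integral functional, writing
\begin{align*}
    2\bigl(I(h_{n_j})-I(h)\bigr)=\int_{[-1,1]^2}|t_1-t_2|\bigl(h_{n_j}(t_1)h_{n_j}(t_2)-h(t_1)h(t_2)\bigr)\,\dd t_1\dd t_2,
\end{align*}
and splitting the product difference as $h_{n_j}(t_1)[h_{n_j}(t_2)-h(t_2)]+[h_{n_j}(t_1)-h(t_1)]h(t_2)$, the uniform bounds on $\|h_{n_j}\|_\infty$ and $\|h\|_\infty$ together with $|t_1-t_2|\le 2$ and the $L_1$-convergence $h_{n_j}\to h$ show $I(h_{n_j})\to I(h)$. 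Hence $I(h)=M$ and the supremum is attained.

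The only subtle step is verifying that $f$ obtained from Lemma~\ref{lem:0916} genuinely has support equal to $[-1,1]$ (as opposed to a strictly smaller set, which would invalidate property~(b) for $h$): this is precisely the content of the second part of~(c') proved in that lemma, so the estimate $f\geq c$ at some point of $[-1,1]$ transfers to the limit. No further delicate analysis is needed, and the existence of a maximizer follows.
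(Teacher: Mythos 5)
Your argument is correct and matches the paper's approach: a maximizing sequence is passed to the class (a')--(d') via Lemma~\ref{lem:0917}, Lemma~\ref{lem:0916} supplies an $L_1$-convergent subsequence, and the uniform $\|\cdot\|_\infty$ bound together with $|t_1-t_2|\le 2$ gives $L_1$-continuity of $I$; you in fact make explicit two details the paper leaves implicit, namely the Lipschitz estimate behind $h_{n_j}\to h$ in $L_1$ and the check that $\int h=1$. One small inaccuracy in your closing remark: the second part of (c') by itself does not force $\supp(f)=[-1,1]$ (a nonnegative concave function can exceed $c$ at one interior point yet vanish on an interval near one of the endpoints); this is immaterial here because Lemma~\ref{lem:0916} already asserts property (b') for the limit $f$ directly.
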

\begin{proof}
    Let $(h_i)_{i\geq 1}$ be a sequence of functions satisfying (a)-(d) such that $\lim_{i\to\infty} I(h_i)$ is the suppremum considered in the statement of the lemma.
    Define $f_i = h_i^{1/(d-1)}$ for each $i\geq 1$.

    By Lemma \ref{lem:0917}, we have that $f_i$ satisfy (a')-(d') for each $i\geq 1$. Therefore by Lemma \ref{lem:0916} there exists a function $f$ satisfying (a')-(d') and a subsequence $(f_{i_j})_{j\geq 1}$ converging to $f$ in the $L_1$-norm.
    It follows that the corresponding subsequence $(h_{i_j})_{j\geq 1}$ converges to $h=f^{d-1}$ with respect to the $L_1$-norm.
    \revision{Observe also that $I$ is a continuous functional (with respect to the $L^1$-norm) on the set of functions satisfying (a)-(d). Indeed, for functions $h$ and $g$
    satisfying (a)-(d) we have that
    \begin{align*}
        |I(h)-I(g)| &\leq {1\over 2}\int_\R\int_\R |t_1-t_2|\,|h(t_1)h(t_2)-g(t_1)g(t_2)|\,\dd t_2\,\dd t_1\\
        &={1\over 2}\int_\R\int_\R |t_1-t_2|\,|(h(t_1)-g(t_1))h(t_2)+(h(t_2)-g(t_2))g(t_1)|\,\dd t_2\,\dd t_1\\
        &\leq 2 \|h-g\|_1 \max( \|h\|_\infty , \|g\|_\infty) 
        \leq 2^{1+\frac{d-2}{d-1}} \|h-g\|_1 ,
    \end{align*}
    where we used the facts that $|t_1-t_2|\leq 2$ (property (b)) and $h$ and $g$ are positive (property (a)) and bounded by $2^{\frac{d-2}{d-1}}$ (Lemma \ref{lem:0917}).}
    Therefore \[\lim_{i\to\infty} I(h_{i}) = \lim_{j\to\infty} I(h_{i_j}) = I(h),\] and the lemma holds.
\end{proof}

\subsection{The upper bound II: Precise form of maximizers}\label{subsec:UpperBound2}

After having seen that maximizers for $I(\,\cdot\,)$ exist, we continue by describing their precise form.

\begin{lemma}
    Assume that $h$ satisfies (a)-(d) and is such that $I(h)$ is maximal.
    Then $h^{1/(d-1)}$ is affine on its support.
\end{lemma}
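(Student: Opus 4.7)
The plan is a first-order variational argument. Assume for contradiction that $h$ maximizes $I(\,\cdot\,)$ under conditions (a)-(d) and that $f := h^{1/(d-1)}$ is not constant on $[-1,1]$---this is weaker than the lemma's hypothesis ``not affine,'' and the argument below will in fact yield the stronger conclusion that $f$ must be constant, which in particular implies $f$ is affine on its support.

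\emph{Setup.} For a direction $\psi\colon[-1,1]\to\R$ such that $f+\epsilon\psi$ remains concave, non-negative, and of support $[-1,1]$ for small $\epsilon>0$ (extending by $0$ outside), renormalize via $f_\epsilon := c_\epsilon(f+\epsilon\psi)$ with $c_\epsilon := (\int_{-1}^{1}(f+\epsilon\psi)^{d-1}\,dt)^{-1/(d-1)}$, so that $h_\epsilon := f_\epsilon^{d-1}$ again satisfies (a)-(d). Using the scaling $I(\alpha h)=\alpha^{2}I(h)$ and Taylor expanding in $\epsilon$,
\[
I(h_\epsilon)-I(h) = \epsilon(d-1)\int_{-1}^{1}\widetilde U(t)\,f(t)^{d-2}\,\psi(t)\,dt + O(\epsilon^{2}),
\]
where $\widetilde U(t) := U(t)-2I(h)$ and $U(t) := \int_{-1}^{1}|t-s|h(s)\,ds$ satisfies $\widetilde U''=2h$, so that $\widetilde U$ is strictly convex on $\{h>0\}$ and $\int_{-1}^{1}\widetilde U\,h\,dt=0$.

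\emph{Key perturbation.} I take $\psi(t) := c_f-f(t)$ with $c_f := (\int_{-1}^{1}f^{d-2}\,dt)^{-1}$. Then $f+\epsilon\psi=(1-\epsilon)f+\epsilon c_f$ is concave as a convex combination of concave functions, strictly positive on $[-1,1]$ (so condition (b) is preserved), and the choice of $c_f$ makes the first-order change of the normalizing integral vanish: $\int_{-1}^{1}f^{d-2}(c_f-f)\,dt = c_f M_{d-2}-M_{d-1}=0$, where $M_k := \int_{-1}^{1}f^k\,dt$. Substituting into the first-order formula and using $\int_{-1}^{1}\widetilde U\,f^{d-1}\,dt=0$, this collapses to
\[
I(h_\epsilon)-I(h) = \epsilon(d-1)\,c_f\int_{-1}^{1}\widetilde U(t)\,f(t)^{d-2}\,dt + O(\epsilon^{2}).
\]
Maximality therefore requires $\int_{-1}^{1}\widetilde U\,f^{d-2}\,dt \le 0$.

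\emph{Main claim (the obstacle).} The proof is reduced to showing $\int_{-1}^{1}\widetilde U\,f^{d-2}\,dt > 0$ whenever $f$ is non-constant. Introducing the probability measures $d\nu_k := f^{k}/M_k\,dt$ with cumulative distribution function $F_k$, this is equivalent to the covariance-type inequality $\mathbb E_{\nu_{d-2}}U > \mathbb E_{\nu_{d-1}}U$: the strictly convex distance function $U$ should have strictly larger mean under the less peaked measure $\nu_{d-2}$ than under the more peaked $\nu_{d-1}$. Integration by parts rewrites this as $\int_{-1}^{1}U'(t)(F_{d-1}(t)-F_{d-2}(t))\,dt$, and one observes that $F_{d-1}-F_{d-2}$ vanishes at $\pm 1$ and, by concavity of $f$ (which implies $f/c_f>1$ on a central sub-interval of $(-1,1)$ and $<1$ in the two tails), has exactly one interior zero, passing from negative to positive. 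Combined with the strict monotonicity of $U'=2F_{d-1}-1$, a careful sign-matching argument should force positivity. I expect this sign-matching---which must handle the fact that the interior zero of $F_{d-1}-F_{d-2}$ and the zero of $U'$ (the median of $\nu_{d-1}$) do not generally coincide---to be the main technical obstacle in the argument.
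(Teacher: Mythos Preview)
Your variational setup is clean and the first-order computation is correct: maximality of $h$ under the perturbation $\psi=c_f-f$ does reduce to the inequality $\int_{-1}^{1}\widetilde U\,f^{d-2}\,dt\le 0$, and you are right that establishing the reverse strict inequality for non-constant $f$ would yield the (stronger) conclusion that $f$ is constant. However, the argument is genuinely incomplete at exactly the point you flag, and the sketch you give for the ``main claim'' does not survive scrutiny. Your description of the sign pattern of $F_{d-1}-F_{d-2}$ relies on $\{f>c_f\}$ being an interior sub-interval of $(-1,1)$, but for concave $f$ this set can touch an endpoint. If $f$ is monotone (e.g.\ $f(t)=\sqrt{1+t}$, which is concave and non-affine), then $\{f>c_f\}$ is a one-sided interval, $(F_{d-1}-F_{d-2})'$ changes sign only once, and $F_{d-1}-F_{d-2}$ keeps a constant sign on $(-1,1)$ with no interior zero at all. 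In that regime your ``negative then positive'' picture collapses, and since $U'=2F_{d-1}-1$ itself changes sign, the integral $\int U'(F_{d-1}-F_{d-2})$ is a difference of two positive contributions with no evident ordering. The covariance reformulation $\mathbb E_{\nu_{d-2}}U-\mathbb E_{\nu_{d-1}}U=-c_f^{-1}\,\mathrm{Cov}_{\nu_{d-2}}(U,f)$ is suggestive (convex against concave), but the base measure $\nu_{d-2}$ is not uniform, so no standard Chebyshev-type correlation inequality applies directly.

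For comparison, the paper's proof is also a perturbation argument, but it perturbs \emph{locally} rather than globally. The key observation is that $J(h,t)=U(t)$ is strictly convex, so $\{t:\widetilde U(t)<0\}$ is an interval $(\alpha,\beta)$. If $f=h^{1/(d-1)}$ is strictly concave at some point $x$, the paper replaces $f$ near $x$ either by its chord (removing mass, when $x\in(\alpha,\beta)$) or by a steeper affine piece (adding mass, when $x\notin[\alpha,\beta]$); in each case the support of the perturbation $\delta$ sits entirely in a region where the sign of $\widetilde U$ is known, so the first-order gain $\int\widetilde U\,\delta$ is trivially of the right sign. This sidesteps precisely the global sign-matching difficulty that blocks your approach. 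The price is that the paper only obtains ``affine'' rather than ``constant'' at this stage, and then needs a separate explicit computation (the next lemma) to single out the constant among affine profiles.
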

\begin{proof}
    Let's $h$ be as in the statement of the lemma. Assume that $h^{1/(d-1)}$ is not affine on its support. Combined with property (d), it implies that there exists a point $x\in(-1,1)$ at which $h^{1/(d-1)}$ is \textit{strictly} concave, meaning that for any neighborhood of $x$ of the form $(x',x'')\subset (-1,1)$, the linear interpolation of $h^{1/(d-1)}$ defined by $[x',x''] \ni (\lambda x' + ( 1-\lambda) x'') \mapsto \lambda f(x') + (1-\lambda) f(x'') $ is strictly smaller than $h^{1/(d-1)}$ at $x$.
    
    The spirit of the proof is to modify $h$ locally around $x$ such that after normalisation we find a new function satisfying (a)-(d) and for which the functional $I(\,\cdot\,)$ takes a bigger value (this will be illustrated in Figure \ref{fig:Proof}). This gives us a contradiction and implies that the assumption that $h^{1/(d-1)}$ is not affine cannot be satisfied, and therefore the lemma holds.
    The way we modify $h$ will depend on the value of the inner integral
    \begin{align*}
        J(h,t_1) & :=\int_{\R}|t_1-t_2|\, h(t_2)\,\dd t_2 , \quad t_1 \in [-1,1] ,
    \end{align*}
    of $I(h)$. Roughly speaking, if $J(h,x)$ is large we will add some mass to $h$ around $x$ and, on the contrary, if it is small we will take out some mass in a neighborhood of $x$. The threshold between small and large is fixed to be the expected value $\mathbb{E}[X]$ of $J(h,X)$ if $X$ is a real-valued random variable distributed with respect to the probability density $h$. This value is
    \begin{align*}
        \mathbb{E}[X]=\int_{-1}^1 J(h,t) \, h(t) \,\dd t = 2 I(h).
    \end{align*}
    We compute the derivatives of the function $J(h,t)$:
    \begin{align*}
        \frac{\partial J(h,t)}{\partial t} 
        = \int_{-1}^{t} h(t_2) \,\dd t_2 -\int_{t}^{1} h(t_2) \,\dd t_2
        \, , \quad \text{and} \quad
        \frac{\partial^2 J(h,t)}{\partial t^2} 
        = 2 h(t) .
    \end{align*}
    Thus, since $h$ is non-negative by assumption (a), $J(h,t)$ is a convex function of $t\in[-1,1]$. It is even strictly convex on the open interval $(-1,1)$ because the combination of assumptions (a), (b) and (d) implies that $h$ is positive on $(-1,1)$.
    In particular there exists an interval $[\alpha,\beta] \subset [-1,1]$ such that
    \begin{align} \label{0910}
        J(h,t) 
        \begin{cases}
            > 2 I(h) & \text{if } t \in [-1,1]\setminus [\alpha,\beta] 
            \\< 2 I(h) & \text{if } t \in (\alpha,\beta) \, .
        \end{cases}
    \end{align}
    
    \bigskip
    \textbf{Case $x\in (\alpha,\beta)$:}
    Since $J(h,\cdot)$ is continuous and because of \eqref{0910}, there exist a positive constant $C<2$ and a neighborhood $(x',x'')$ of $x$ such that
    \begin{align} \label{0910d}
        J(h,t) < C I(h) \text{ for any } t\in[x',x''].
    \end{align}
    Note that we can choose $x'$ and $x''$ arbitrarily close to $x$.
    Let $\delta:\mathbb{R}\to\mathbb{R}$ be the positive function with support $[x',x'']$ characterised by the properties that $\delta(x')=\delta(x'')=0$ and $(h-\delta)^{1/(d-1)}$ is affine on the closed interval $[x',x'']$. The function $(h-\delta)^{1/(d-1)}$ restricted to $[x',x'']$ is the affine interpolation described at the beginning of this proof. Outside of the interval $[x',x'']$ it is simply the function $h^{1/(d-1)}$.
    The function $h-\delta$ satisfies properties (a), (b) and (d). 
    We will show that, for its normalized version, we have that
    \begin{align} \label{0910a}
        I \left( \frac{ h-\delta}{ \int h(t)-\delta(t) \,\dd t } \right) 
        = \frac{I(h-\delta)}{ 1 - 2 \int \delta(t)\,\dd t + \left( \int \delta(t)\,\dd t \right)^2  }
    \end{align}
    is strictly bigger than $I(h)$.
    We will use the notation
    \begin{align*}
        I(h_1,h_2):=\frac12\int_{\R}\int_{\R}|t_1-t_2|\, h_1(t_1)\, h_2(t_2)\,\dd t_2\,\dd t_1,
    \end{align*}
    for the symmetric bilinear form for which we have $I(h) = I(h,h)$.
    In particular
    \begin{align} \label{0909}
        I \left( h-\delta \right) 
        = I(h) - 2 I(h,\delta) + I(\delta) 
        > I(h) - 2 I(h,\delta) \, ,
    \end{align}
    and
    \begin{align} \label{0910b}
        2 I (h,\delta)
        = \int_{\R} J(h,t) \, \delta(t) \,\dd t
        < C I(h) \int_{\R} \delta(t) \, \dd t \,,
    \end{align}
    where the last inequality follows from \eqref{0910d} and the fact that the support of $\delta$ is $[x',x'']$.
    Combining the inequalities \eqref{0910b}, \eqref{0909} with the equality \eqref{0910a} yields 
    \begin{align} \label{0910e}
        I \left( \frac{ h-\delta}{ \int h(t)-\delta(t) \,\dd t } \right) 
        > I(h) \frac{  1 - C \int \delta(t)\,\dd t }{ 1 - 2 \int \delta(t)\,\dd t + \left( \int \delta(t)\,\dd t \right)^2  } .
    \end{align}
    Since $x'$ and $x''$ can be chosen arbitrarily close to $x$, we can assume that $\int \delta(t)\,\dd t < 2-C$. The latter inequality implies that the right hand side of \eqref{0910e} is strictly bigger than $I(h)$, and we obtained the desired contradiction.
    
    \bigskip
    \textbf{Case $x\in [-1,1]\setminus [\alpha,\beta]$:} Without loss of generality we assume that $-1<x<\alpha$, since otherwise we could consider the function $h(-\, \cdot)$ instead of $h(\cdot)$.
    This time we modify $h$ by increasing it in the interval $[-1,x]$.
    Let $\epsilon>0$ be arbitrarily small, and let $\delta:\mathbb{R}\to\mathbb{R}$ be the smallest positive function such that $(h+\delta)(-1)^{1/(d-1)}=h(-1)^{1/(d-1)} + \epsilon$ and $h+\delta$ satisfy (a), (b) and (d).
    It can be described as follow\revision{s}: its support is of the form $[-1,x']$ for some $x'\in (-1,x]$ and $(h+\delta)^{1/(d-1)}$ is affine on $[-1,x']$.
    Following analogous steps as in the case $x\in(\alpha,\beta)$, we obtain
    \begin{align} \label{0910f}
        I \left( \frac{ h+\delta}{ \int h(t)+\delta(t) \,\dd t } \right) 
        > I(h) \frac{  1 + C \int \delta(t)\,\dd t }{ 1 + 2 \int \delta(t)\,\dd t + \left( \int \delta(t)\,\dd t \right)^2  } ,
    \end{align}
    where $C>2$ is a constant depending only on $h$ and $x$.
    By choosing $\epsilon$ small enough we can ensure that $\int \delta(t)\,\dd t < C-2$, which implies that the right hand side of \eqref{0910f} is strictly bigger than $I(h)$, and we obtained the desired contradiction.
    \begin{figure}
\begin{center}
    \begin{subfigure}[t]{0.32\textwidth}
        \begin{tikzpicture}[scale=2, every node/.style={scale=1}]
            \draw[->, >=latex] (-1.1,0) -- (1.1,0);
            \draw (-1,-0) -- +(0,-1pt) node[below]{$\scriptstyle -1$};
            \draw (1,-0) -- +(0,-1pt) node[below=]{$\scriptstyle 1 $};
            \draw (-0.1,0) -- +(0,-1pt) node[below]{$\scriptstyle \alpha \vphantom{x'}$};
            \draw (0.85,0) -- +(0,-1pt) node[below=]{$\scriptstyle \beta \vphantom{x'}$};
            \draw[domain=-1:0.5, samples=60] plot (\x,0.5*\x+0.6);
            \draw[domain=0.5:1, samples=60] plot (\x,-\x*\x+\x+0.6);
            \draw (-1,0.1) -- (-1,0);
            \draw (1,0.6) -- (1,0);
            \draw (0.2,0.7) -- (0.7,0.81);
            \draw[dashed,black!40] (0.2,0) -- +(0,-1pt) node[below,black]{$\scriptstyle x'$} -- (0.2,0.7); 
            \draw[dashed,black!40] (0.5,0) -- +(0,-1pt) node[below,black]{$\scriptstyle x \vphantom{x'}$} -- (0.5,0.85); 
            \draw[dashed,black!40] (0.7,0) -- +(0,-1pt) node[below,black]{$\scriptstyle x''$} -- (0.7,0.81);
        \end{tikzpicture}
        \caption{Case $x\in (\alpha,\beta).$}
    \end{subfigure}
    \begin{subfigure}[t]{0.32\textwidth}    
        \begin{tikzpicture}[scale=2, every node/.style={scale=1}]
            \draw[->, >=latex] (-1.1,0) -- (1.1,0);
            \draw (-1,-0) -- +(0,-1pt) node[below]{$\scriptstyle -1$};
            \draw (1,-0) -- +(0,-1pt) node[below=]{$\scriptstyle 1 $};
            \draw (-0.1,0) -- +(0,-1pt) node[below]{$\scriptstyle \alpha \vphantom{x'}$};
            \draw (0.1,0) -- +(0,-1pt) node[below=]{$\scriptstyle \beta  \vphantom{x'}$};
            \draw[domain=-1:1, samples=60] plot (\x,-0.5*\x*\x+0.6);
            \draw (-1,0.1) -- (-1,0);
            \draw (1,0.1) -- (1,0);
            \draw (-1,0) -- (-1,0.2) -- (-0.5,0.475);
            \draw[dashed,black!40] (-0.5,0) -- +(0,-1pt) node[below,black]{$\scriptstyle x'$} -- (-0.5,0.475);
            \draw[dashed,black!40] (-0.3,0) -- +(0,-1pt) node[below,black]{$\scriptstyle x \vphantom{x'}$} -- (-0.3,0.555);
        \end{tikzpicture}
        \caption{Case $x\in [-1,1] \setminus [\alpha,\beta].$}
    \end{subfigure}
    \begin{subfigure}[t]{0.32\textwidth}
        \begin{tikzpicture}[scale=2, every node/.style={scale=1}]
            \draw[->, >=latex] (-1.1,0) -- (1.1,0);
            \draw (-1,-0) -- +(0,-1pt) node[below]{$\scriptstyle -1$};
            \draw (1,-0) -- +(0,-1pt) node[below=]{$\scriptstyle 1 $};
            \draw (-0.2,0) -- +(0,-1pt) node[below]{$\scriptstyle x = \alpha \vphantom{x'}$};
            \draw (0.2,0) -- +(0,-1pt) node[below=]{$\scriptstyle \beta \vphantom{x'} $};
            \draw (-1,0) -- (-1,0.2) -- (-0.2,0.5) -- (0.2,0.4) -- (1,0);
            \draw (-1,0) -- (-1,0.3) -- (-0.2,0.5);
            \draw[dashed,black!40] (-0.2,0) -- +(0,0.5);
        \end{tikzpicture}  
        \caption{Case $x\in \{\alpha,\beta\}.$} 
    \end{subfigure}
\end{center}
\caption{Illustration of the functions $h^{1/(d-1)}$ and its locally modified version $(h\pm\delta)^{1/(d-1)}$.
In Figure (A) $(h-\delta)^{1/(d-1)}$ differs from $h^{1/(d-1)}$ on $(x',x'')$.
In Figure (B) $(h+\delta)^{1/(d-1)}$ differs from $h^{1/(d-1)}$ on $[-1,x')$.
In Figure (C) $(h+\delta)^{1/(d-1)}$ differs from $h^{1/(d-1)}$ on $[-1,\alpha)$.}
\label{fig:Proof}
\end{figure}
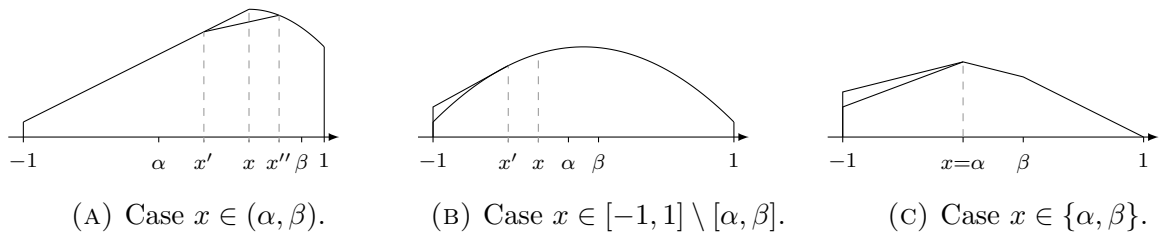

    \bigskip
    \textbf{Case $x\in \{\alpha,\beta\}$:} Without loss of generality we assume that $x = \alpha$, since otherwise we could consider the function $h(-\, \cdot)$ instead of $h(\cdot)$.
    Thanks to the study of the two previous cases we know that $h^{1/(d-1)}$ is affine on each of the three intervals $[-1,\alpha]$, $[\alpha,\beta]$ and $[\beta,1]$.
    We take $\epsilon>0$ arbitrarily small and proceed with the same modification of $h$ as in the previous case. This time we can be a bit more explicit. Since we know that both $h^{1/(d-1)}$ and $(h+\delta)^{1/(d-1)}$ are affine on the interval $[-1,\alpha]$, we can write
    \begin{align*}
        (h+\delta)(t)^{\frac{1}{d-1}} = h(t)^{\frac{1}{d-1}} + \frac{\alpha-t}{\alpha+1}  \epsilon\,, \qquad t\in[-1,\alpha] .
    \end{align*}
    From this we get that there exists a fixed non-negative function $f$ with support $[-1,\alpha]$ such that
    \begin{align*}
        \delta(t)
        = \left( h(t)^{\frac{1}{d-1}} + \frac{\alpha-t}{\alpha+1} \epsilon \right)^{d-1} \!\!\!\! -h(t)
        = (1+o(1)) f(t) \epsilon \,, \qquad t\in[-1,\alpha] .
    \end{align*}
    Moreover the function $f$ is strictly positive on $[-1,\alpha)$.
    This implies that, as $\epsilon\to 0$,
    \begin{align*}
        \frac{\delta(t)}{\int_\R \delta(t) \,\dd t}
        \to \delta_0 (t)
        := \frac{f(t)}{\int_\R f(t) \,\dd t} \,,\qquad t\in[-1,\alpha] .
    \end{align*}
    The function $\delta_0$ is the density of some fixed random variable supported in the interval $[-1,\alpha]$.
    Therefore
    \begin{align*}
        2 I (h,\delta)
        = \int_{\R} J(h,t) \, \delta(t) \,\dd t
        = (1+o(1)) \int_{\R} J(h,t) \, \delta_0(t) \,\dd t \int_\R \delta(t) \,\dd t \,.
    \end{align*}
    Since $\delta_0$ is the density of some random variable supported in $[-1,\alpha]$ and is not concentrated at $\alpha$, we have that $\eqref{0910}$ implies
    \begin{align*}
        C \, I(h) := \int_{\R} J(h,t) \, \delta_0(t) \,\dd t > 2 I(h) .
    \end{align*}
    We can now write 
    \begin{align*} 
        2 I (h,\delta)
        = (1+o(1)) C I(h) \int_\R \delta(t) \,\dd t \,.
    \end{align*}
    This was the most technical part of the proof. Now we finish as in the other cases.
    We have
    \begin{align*}
        I \left( \frac{ h+\delta}{ \int h(t)+\delta(t) \,\dd t } \right) 
        > I(h) \frac{  1 + (1+o(1)) C \int \delta(t)\,\dd t }{ 1 + 2 \int \delta(t)\,\dd t + \left( \int \delta(t)\,\dd t \right)^2  } .
    \end{align*}
    By picking $\epsilon$ sufficiently small the right hand side of the last equation becomes greater than $I(h)$ and we get our contradiction.
\end{proof}

\subsection{The upper bound III: Computation of the maximum}\label{subsec:UpperBound3}

Finally, we are prepared to compute the maximal value the functional $I(\,\cdot\,)$ can attain on the class of functions satisfying (a)-(d).

\begin{lemma}
Assume that $h$ satisfies (a)-(d) and $h^{1/(d-1)}$ is affine on its support. Then for $d\ge 2$ we have
\begin{equation}\label{eq:MaxMax}
    I(h)\leq {1\over 3},
\end{equation}
where the equality holds if and only if $h(t)={1\over 2}\ind_{[-1,1]}(t)$.
\end{lemma}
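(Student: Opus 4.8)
The plan is to pass from the functional $I(h)$ to an expression in terms of the \emph{quantile function} of $h$, and then to exploit the rigid shape this quantile function takes once $h^{1/(d-1)}$ is affine.

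First I would reduce to a canonical situation. Since $I(h)=I(h(-\,\cdot\,))$, one may assume $h^{1/(d-1)}$ is non-decreasing on $[-1,1]$. If $h^{1/(d-1)}$ is constant, then property (c) forces $h=\tfrac{1}{2}\ind_{[-1,1]}$ and a direct computation gives $I(h)=\tfrac{1}{3}$, so from then on I may assume $h^{1/(d-1)}$ is \emph{strictly} increasing. Writing $h^{1/(d-1)}(t)=\gamma(t+c)$ with $\gamma>0$ and $c\ge 1$ (the condition $c\ge1$ being equivalent to $h^{1/(d-1)}(-1)\ge0$), the distribution function $H(t)=\int_{-1}^t h$ is a smooth strictly increasing bijection $[-1,1]\to[0,1]$; a short computation using $H(1)=1$ shows that its inverse $Q=H^{-1}\colon[0,1]\to[-1,1]$ has the explicit form
\[
Q(u)=(au+b)^{1/d}-c,\qquad a=(c+1)^d-(c-1)^d>0,\quad b=(c-1)^d\ge0 .
\]

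Next I would record the identity $I(h)=\int_0^1 Q(u)(2u-1)\,\dd u$, obtained from $I(h)=\int_\R H(t)(1-H(t))\,\dd t$ (which is established inside the proof of Lemma~\ref{lem:LB1}) via the quantile transform $X\eqdistr Q(U)$ with $U$ uniform, together with the densities $2u$ and $2(1-u)$ of the maximum and minimum of two independent uniform random variables. For $Q(u)=2u-1$ (the uniform density) this returns $\tfrac{1}{3}$, so with $R(u):=Q(u)-(2u-1)$ it remains to prove $\int_0^1 R(u)(2u-1)\,\dd u\le0$, strictly in the non-constant case. The structural point that makes this work is that $Q'(u)=\tfrac{a}{d}(au+b)^{-(d-1)/d}$ is a positive constant times an affine function of $u$ raised to a power in $(-1,0)$, so $Q'$ is \emph{strictly convex} on $(0,1)$, i.e.\ $Q''$ is strictly increasing there. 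I would then fold the integral about the midpoint: substituting $u\mapsto1-u$ on $[\tfrac{1}{2},1]$ gives
\[
\int_0^1 R(u)(2u-1)\,\dd u=\int_0^{1/2}\big(R(u)-R(1-u)\big)(2u-1)\,\dd u ,
\]
and I set $\Phi(u):=R(u)-R(1-u)=Q(u)-Q(1-u)-2(2u-1)$. Then $\Phi$ is continuous on $[0,\tfrac{1}{2}]$ with $\Phi(0)=\Phi(\tfrac{1}{2})=0$, while $\Phi''(u)=Q''(u)-Q''(1-u)<0$ for $u\in(0,\tfrac{1}{2})$ because $u<1-u$ and $Q''$ is increasing. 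Hence $\Phi$ is strictly concave on $(0,\tfrac{1}{2})$, so $\Phi>0$ there, and since $2u-1<0$ on $(0,\tfrac{1}{2})$ the folded integral is strictly negative. This gives $I(h)<\tfrac{1}{3}$ whenever $h^{1/(d-1)}$ is non-constant, which together with the constant case establishes \eqref{eq:MaxMax} and pins the equality case to $h=\tfrac{1}{2}\ind_{[-1,1]}$.

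The step I expect to be the main obstacle is precisely the passage from ``$Q'$ is a negative power of an affine function'' to the sign of $\int_0^1 R(u)(2u-1)\,\dd u$. A naive attempt to dominate $Q(u)(1-Q(u))$, or $I(h)$ itself, pointwise by its value for the uniform density fails, and $Q$ need not even be Lipschitz, since in the degenerate ``cone'' case $c=1$ (that is $h^{1/(d-1)}(-1)=0$) the derivative $Q'$ blows up at $0$. What rescues the argument is that this blow-up is convex, which is exactly what forces the midpoint-symmetrised defect $\Phi$ to be concave and to vanish only at the endpoints. Along the way a few routine analytic points need checking — convergence of $\int_0^1Q(u)(2u-1)\,\dd u$ (immediate from $|Q|\le1$), smoothness of $Q$ on $(0,1)$, and the justification of the quantile identity — but none of these should cause difficulty.
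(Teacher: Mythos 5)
Your proof is correct, but it follows a genuinely different route from the paper. The paper parametrizes the maximizer as $h_{a,b}(t)=C_{a,b}(at+b)^{d-1}\ind_{[-1,1]}(t)$, computes $I(h_{a,b})$ in closed form, and then reduces $I(h_{a,b})<\tfrac13$ to showing that every coefficient of an explicit polynomial $T_d(p)$ (built from binomial coefficients) is nonpositive — a direct but fairly laborious computation. You instead pass to the quantile function: using $I(h)=\int_{\R}H(t)(1-H(t))\,\dd t$ (which indeed is established in the proof of Lemma \ref{lem:LB1} before evenness is used) you get $I(h)=\int_0^1 Q(u)(2u-1)\,\dd u$ with $Q(u)=(au+b)^{1/d}-c$, and then the key observation is structural: $Q''$ is strictly increasing (since $Q'$ is a negative power in $(-1,0)$ of a positive affine function), so the folded defect $\Phi(u)=Q(u)-Q(1-u)-2(2u-1)$ is strictly concave on $(0,\tfrac12)$ with $\Phi(0)=\Phi(\tfrac12)=0$, hence positive there, and pairing it against the negative weight $2u-1$ gives $I(h)<\tfrac13$ whenever the affine root is non-constant; the constant case gives equality and $h=\tfrac12\ind_{[-1,1]}$. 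I checked the details — the formula for $Q$, the identity $I(h)=\int_0^1 Q(u)(2u-1)\,\dd u$ (via $\E\max-\E\min$ or integration by parts in $\int_0^1 u(1-u)Q'(u)\,\dd u$), the endpoint values $Q(0)=-1$, $Q(1)=1$, the sign of $Q'''$, and the treatment of the degenerate cone case $b=0$ where $Q'$ blows up at $0$ — and they all hold, with the constant case and the reflection reduction covering all affine profiles. Your approach buys a conceptually cleaner argument that handles all $d\ge 2$ and both degenerate subcases ($r_1=0$ and $r_1>0$ in the paper's notation) uniformly, and it makes the strictness and the equality case transparent; the paper's approach is more computational but entirely explicit, yielding the exact value of $I(h_{a,b})$ along the way (e.g. $\tfrac{2d}{(d+1)(2d+1)}$ for the cone), which is reused in the sharpness discussion.
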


\begin{proof}
According to the assumptions of the lemma the function $h$ has the following form
$$
h(t)=h_{a,b}(t):=C_{a,b}(at+b)^{d-1}\ind_{[-1,1]}(t),
$$
for some $a,b\in\mathbb{R}$, where due to property (c) we have
$$
C_{a,b}:=\Big(\int_{-1}^1(at+b)^{d-1}\dd t\Big)^{-1}.
$$
Moreover, since for $\tilde h(t):=h(-t)$ we have $I(h)=I(\tilde h)$, without loss of generality we assume $a\ge 0$ and due to property (a) we conclude $b-a\ge 0$.

If $a=0$, then $h_{0,b}(t):=h_0(t)={1\over 2}\ind_{[-1,1]}(t)$ is independent of $b$ and
\begin{equation}\label{eq_11_09_1}
I(h_{0})={1\over 8}\int_{-1}^1\int_{-1}^1|t_1-t_2|\dd t_1\dd t_2={1\over 4}\int_{-1}^1\int_{-1}^{t_2}(t_2-t_1)\dd t_1\dd t_2={1\over 8}\int_{-1}^1(t_2+1)^2\dd t_2={1\over 3}.
\end{equation}

Assume from now on that $a\neq 0$. Then
$$
C_{a,b}^{-1}=\int_{-1}^1(at+b)^{d-1}\dd t={(a+b)^d-(b-a)^d\over da}.
$$
Using the change of variables $s_i=at_i+b$, $i=1,2$ we compute
\begin{align*}
I(h_{a,b})&={C_{a,b}^2\over 2}\int_{-1}^1\int_{-1}^1|t_1-t_2|(at_1+b)^{d-1}(at_2+b)^{d-1}\dd t_1\dd t_2\\
&={C_{a,b}^2\over 2a^3}\int_{b-a}^{b+a}\int_{b-a}^{b+a}|s_1-s_2|s_1^{d-1}s_2^{d-1}\dd s_1\dd s_2\\
&={C_{a,b}^2\over a^3}\int_{b-a}^{b+a}\int_{b-a}^{s_2}(s_2-s_1)s_1^{d-1}s_2^{d-1}\dd s_1\dd s_2.
\end{align*}
We introduce the notation $r_1:=b-a\ge 0$ and $r_2:=a+b>r_1$. Then
\begin{align*}
I(h_{a,b})&={2d^2\over (r_2-r_1)(r_2^d-r_1^d)^2}\int_{r_1}^{r_2}\Big({s_2^{2d}-r_1^ds_2^d\over d}-{s_2^{2d}-r_1^{d+1}s_2^{d-1}\over (d+1)}\Big)\dd s_2\\
&={2d\over (r_2-r_1)(r_2^d-r_1^d)^2}\Big({r_2^{2d+1}-r_1^{2d+1}\over (d+1)(2d+1)}-{r_1^{d}r_2^{d}(r_2-r_1)\over d+1}\Big).
\end{align*}
If $r_1=b-a=0$, then
\begin{equation}\label{eq_11_09_2}
I(h_{a,b})={2d\over (d+1)(2d+1)},
\end{equation}
otherwise let $p:=r_2/r_1-1>0$. With this notation we have
\begin{align*}
I(h_{a,b})&={2d\over (d+1)p((p+1)^d-1)^2}\Big({(p+1)^{2d+1}-1\over 2d+1}-(p+1)^{d}p\Big)=:\ell_d(p).
\end{align*}

In the next step we prove that $\ell_d(q)< {1\over 3}$ for all $d\ge 2$ and $p>0$. We have
\begin{align*}
\ell_d(p)&={2d((p+1)^{2d+1}-1-(2d+1)p(p+1)^d)\over (d+1)(2d+1)p((p+1)^{2d}-2(p+1)^d+1)}\\
&={2d\over (d+1)(2d+1)}{p^3\Big(\sum_{i=3}^{2d+1}{2d+1 \choose i}p^{i-3}-(2d+1)\sum_{i=2}^d{d\choose i}p^{i-2}\Big)\over p^3\Big(\sum_{i=2}^{2d}{2d \choose i}p^{i-2}-2\sum_{i=2}^d{d\choose i}p^{i-2}\Big)}\\
&={2d\over (d+1)(2d+1)}{\sum_{k=0}^{2d-2}{2d+1 \choose k+3}p^{k}-(2d+1)\sum_{k=0}^{d-2}{d\choose k+2}p^{k}\over \sum_{k=0}^{2d-2}{2d \choose k+2}p^{k}-2\sum_{k=0}^{d-2}{d\choose k+2}p^{k}}\\
&={2d\over (d+1)(2d+1)}{ \sum_{k=0}^{d-2}\Big({2d+1 \choose k+3}-(2d+1){d\choose k+2}\Big)p^{k}+\sum_{k=d-1}^{2d-2}{2d+1 \choose k+3}p^{k}\over \sum_{k=0}^{d-2}\Big({2d \choose k+2}-2{d\choose k+2}\Big)p^{k}+\sum_{k=d-1}^{2d-2}{2d \choose k+2}p^{k}}\\
&= {2d\over (d+1)(2d+1)}\,{N_d(p) \over D_d(p)}.
\end{align*}
Consider a polynomial
\begin{align*}
T_d(p)&:= {2d\over (d+1)(2d+1)}N_d(p) - {1\over 3}D_d(p)\\
&={2d\over (d+1)(2d+1)}\Big[\sum_{k=0}^{d-2}\Big({2d+1 \choose k+3}-(2d+1){d\choose k+2}\Big)p^{k}+\sum_{k=d-1}^{2d-2}{2d+1 \choose k+3}p^{k}\Big]\\
&\qquad\qquad-{1\over 3}\Big[\sum_{k=0}^{d-2}\Big({2d \choose k+2}-2{d\choose k+2}\Big)p^{k}+\sum_{k=d-1}^{2d-2}{2d \choose k+2}p^{k}\Big]\\
&=\sum_{k=0}^{d-2}\Big[{2d{2d+1 \choose k+3}\over (d+1)(2d+1)}-{2d{d\choose k+2}\over (d+1)}-{1\over 3}{2d \choose k+2}+{2\over 3}{d\choose k+2}\Big]p^{k}\\
&\qquad\qquad+\sum_{k=d-1}^{2d-2}\Big[{2d{2d+1 \choose k+3}\over (d+1)(2d+1)}-{1\over 3}{2d \choose k+2}\Big]p^{k}.
\end{align*}
Our goal is to show that for $d\ge 2$ all coefficients of polynomial $T_d$ are negative, which would mean that $T_d(p)< 0$ for $p>0$. Note that moreover $T_d(0)=0$.

Consider first the coefficients $\alpha_{k,d}$ for $0\leq k\leq d-2$, where
\begin{align*}
\alpha_{k,d}:&={2d{2d+1 \choose k+3}\over (d+1)(2d+1)}-{2d{d\choose k+2}\over (d+1)}-{1\over 3}{2d \choose k+2}+{2\over 3}{d\choose k+2}\\
&={2d\over 3(k+2)!(d+1)}\Big(\Big({3-k\over k+3}d-1\Big)(2d-1)\ldots(2d-k-1)\\
&\hspace{7cm}-(2d-1)(d-1)\cdots(d-k-1)\Big).
\end{align*}
It is clear, that $\alpha_{k,d}< 0$ for $k\ge 3$ and it is easy to check that $\alpha_{0,d}=\alpha_{1,d}=0$ for any $d\ge 2$. For $k=2$ we have
\begin{align*}
\alpha_{2,d}&={2d\over 3\cdot 4!(d+1)}\Big(\Big({1\over 5}d-1\Big)(2d-1)(2d-2)(2d-3)-(2d-1)(d-1)(d-2)(d-3)\Big)\\
&=-{2d^2(2d-1)(d-1)\over 15\cdot 4!}<0.
\end{align*}
Now consider the coefficients $\beta_{k,d}$ for $d-1\leq k\leq 2d-2$, where
\begin{align*}
\beta_{k,d}:&={2d{2d+1 \choose k+3}\over (d+1)(2d+1)}-{1\over 3}{2d \choose k+2}\\
&={2d(2d-1)\cdots(2d-1-k)\over 3(k+2)!(d+1)}\Big({6d\over k+3}-d-1\Big)\\
&\leq {2d(2d-1)\cdots(2d-1-k)\over 3(k+2)!(d+1)}\Big({6d\over d+2}-d-1\Big)\\
&=-{2d(2d-1)\cdots(2d-1-k)\over 3(k+2)!(d+1)(d+2)}(d^2-3d+2).
\end{align*}
Since the polynomial $d^2-3d+2$ has roots $d=1$ and $d=2$ we conclude that $\beta_{d,k}\leq 0$ for $d-1\leq k\leq 2d-2$ and $d\ge 2$.

Finally we conclude that
\begin{align*}
\ell_d(p)-{1\over 3} &= {2d\over (d+1)(2d+1)}\,{N_d(p)\over D_d(p)}-{1\over 3} = {{2d\over (d+1)(2d+1)}\,N_d(p)-{1\over 3}D_d(p)\over D_d(p)} \\
&={T_d(p)\over \sum_{k=0}^{d-2}\Big({2d \choose k+2}-2{d\choose k+2}\Big)p^{k}+\sum_{k=d-1}^{2d-2}{2d \choose k+2}p^{k}}<0,
\end{align*}
for $p>0$, since ${2d \choose k+2}-2{d\choose k+2}>0$ for $d\ge 2$ and $0\leq k\leq d-2$. 

Combining this with \eqref{eq_11_09_1} and \eqref{eq_11_09_2} and noting that for $d\ge 2$
$$
{2d\over (d+1)(2d+1)}< {1\over 3}
$$
we finish the proof.
\end{proof}

\begin{proof}[Proof of Theorem \ref{2311}, upper bound in \eqref{1719}]
\revision{A non-strict version is now a direct consequence the results we established in Section \ref{subsec:UpperBound1} -- Section \ref{subsec:UpperBound3}. To get a strict upper bound one argue in a very similar way as for the strict lower bound in the proof of Theorem \ref{2311}.}
\end{proof}

\subsection{Sharpness of estimates}\label{1305}
Now let us prove~\eqref{1308}. Due to~\eqref{1719}, it is enough to show that
\begin{align*}
    \limsup_{\delta\to 0}\frac{\Delta(K_\delta)}{V_1(K_\delta)}\leq\frac{3d+1}{2(d+1)(2d+1)}\qquad\text{and}\qquad\liminf_{\delta\to 0}\frac{\Delta(K'_\delta)}{V_1(K'_\delta)}\geq\frac13\revision{.}
\end{align*}
\revision{D}ue to $\lim_{\delta\to 0}V_1(K_\delta)=\lim_{\delta\to 0}V_1(K'_\delta)={2}$\revision{, the above is} equivalent to
\begin{align}\label{2220}
    \limsup_{\delta\to 0}{\Delta(K_\delta)}\leq\frac{3d+1}{(d+1)(2d+1)}\qquad\text{and}\qquad\liminf_{\delta\to 0}{\Delta(K'_\delta)}\geq\frac{{2}}3.
\end{align}
We start with the second part. Let $X_1,X_2$ be independently and uniformly distributed in $K'_\delta$. Denoting by $P_1:\R^d\to\R$ the projection onto the first coordinate, we get
\begin{align*}
    \Delta(K'_\delta)=\E|X_1-X_2|\geq \E|P_1 X_1-P_1 X_2|=\Delta([{-1},1])=\frac{2}{3},
\end{align*}
which implies the \revision{second inequality in}~\eqref{2220}.

Now let $X_1,X_2$ be independently and uniformly distributed in $K_\delta$. We have
\begin{align*}
    \Delta(K_\delta)&=\E|X_1-X_2|\leq \E\left[|P_1 X_1-P_1 X_2|+\delta\right]
    \\\notag
    &=\int_{{-1}}^1\int_{{-1}}^1 |t_1-t_2|\,{h_0(t_1)\,h_0(t_2)}\,\dd t_2\,\dd t_1+\delta
    \\\notag
    &=\frac{3d+1}{(d+1)(2d+1)}+\delta,
\end{align*}
where {$h_0$ is defined in~\eqref{2217} and} in the last step we used~\eqref{2218}. Taking the limit as  $\delta\to 0$, the \revision{first inequality in}~\eqref{2220} follows. This eventually completes the proof of Theorem \ref{2311}.\hfill $\Box$

\subsection*{Acknowledgement}
This project has been iniciated when DZ was visiting Ruhr University Bochum in September and October 2019. Financial support of the German Research Foundation (DFG) via Research Training Group RTG 2131 \textit{High-dimensional Phenomena in Probability -- Fluctuations and Discontinuity} is gratefully acknowledged. We also thank an anonymous referee for insightful comments and remarks which helped us to further improve our paper. We also thank Uwe B\"asel for pointing us to the correct value for $\Delta(H(a))$ in \eqref{eq:Hexagon}.

\bibliographystyle{plain}
\bibliography{bib}

\end{document}